\newtheorem{Defi}{Definition}[section]
\newtheorem{R}{Remark}[section]
\newtheorem{T}{Theorem}[section]
\newtheorem{C}{Corollary}[section]
\newtheorem{lemma}{Lemma}[section]
\newcommand{\ds}{\displaystyle}
\newcommand{\B}{\mathcal{B}}
\newcommand{\re}{\mathbb{R}}
\title{\textbf{Riemannian manifolds with Anosov geodesic flow do not have conjugate points}}
\author{\'Italo  Melo  and Sergio Roma\~na\footnote{Partially supported by program   CAPES-PrInt-BR, process  N$^o$ 88887.311615/2018-00, Bolsa
Jovem Cientista do Nosso Estado No. E-26/201.432/2022 - Brazil, NNSFC 12071202, and NNSFC 12161141002 from China. The second author thanks the Department of Mathematics of the SUSTech- China for its hospitality during the execution of this work.} }  
\date{}
\begin{document}
\maketitle

\begin{abstract}
This paper establishes a significant result concerning the absence of conjugate points in certain complete Riemannian manifolds. Specifically, we demonstrate that any complete non-compact manifold with curvature bounded below and an Anosov geodesic flow does not possess conjugate points. This resolves an open problem left by R. Ma\~n\'e  in \cite{man:87} and subsequently highlighted by \cite{Knieper}.
\end{abstract}
 \section{Introduction}
In \cite{Ano:69}, Anosov demonstrated that geodesic flows on compact manifolds with negative curvature exhibit chaotic dynamical behavior, leading to what are known as uniformly hyperbolic systems or simply ``Anosov" systems. Anosov's argument extends to non-compact manifolds with negatively pinched curvature, where the geodesic flow remains Anosov (cf. \cite{Knieper}).

In \cite{kli:74}, Klingenberg showed that compact manifolds with Anosov geodesic flows share several crucial properties with negatively curved manifolds. These properties include the absence of conjugate points, ergodicity of the geodesic flow, dense periodic orbits, exponential growth of the fundamental group, and zero index for every closed geodesic. While some of these results do not hold for non-compact manifolds, Ma\~n\'e's  seminal paper \cite{man:87}, using the Maslov Index, established that if the geodesic flow admits a continuous invariant Lagrangian subbundle, then there are no conjugate points. Consequently, manifolds of finite volume and  Anosov geodesic flow do not have conjugate points due to the continuity and invariance of stable and unstable bundles.

Ma\~n\'e's assertion that complete non-compact manifolds of infinite volume with bounded curvature below do not possess conjugate points under Anosov geodesic flow was made in the same paper. However, as noted in \cite[pp. 475-476]{Knieper}, there is an error in Proposition II.2 of Ma\~n\'e's proof (also discussed in \cite{Kni:18}). Subsequently, the following conjecture emerged: \\
\ \\
\noindent \textbf{Conjecture:} If M is a non-compact complete Riemannian manifold with curvature bounded below and an Anosov geodesic flow, then M has no conjugate points.

\ \\

This problem has garnered recent attention, especially following the noteworthy work by G. Knieper (cf. \cite{Kni:18}). Knieper addressed the conjecture by introducing three additional geometric conditions. However, it's important to highlight that while these conditions were substantial, they were considered quite stringent and did not offer complete proof of the conjecture.

The main objective of this study is to address this conjecture, aiming to prove it in dimension two without relying on any additional geometric conditions. Furthermore, we extend our analysis to any dimension, provided that the curvature remains bounded. Specifically, we establish the following theorems:




\begin{T}\label{main2}
Suppose $M$ is a non-compact two-dimensional manifold with sectional curvature bounded below. If the geodesic flow of $M$ is Anosov, then $M$ has no conjugate points.
\end{T}

For our second result, we define a manifold $M$ to have bounded sectional curvature if there exist two positive constants $k$ and $b$ such that

$$-k^2\leq K_{M}\leq b^2.$$ 

\begin{T}\label{main1}
Let $M$ be a non-compact complete Riemannian manifold with bounded sectional curvature. If the geodesic flow of $M$ is Anosov, then $M$ has no conjugate points.
\end{T}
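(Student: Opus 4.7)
The natural approach is proof by contradiction. Suppose there exist conjugate points, so there is a geodesic $\gamma$ and $T>0$ together with a non-trivial Jacobi field $J$ along $\gamma$ satisfying $J(0)=J(T)=0$. I would work in $TSM/\langle X\rangle$, with $X$ the geodesic spray, which carries the natural symplectic form; every tangent vector $\xi\in TSM$ corresponds to a Jacobi field $J_\xi$ along the base geodesic, and the vertical subspace $V(\theta)\subset TSM$ corresponds exactly to Jacobi fields vanishing at the base point. The Anosov hypothesis gives the splitting $TSM=E^s\oplus E^u\oplus\langle X\rangle$ with uniform exponential contraction/expansion, where $E^s,E^u$ are continuous, $d\phi_t$-invariant, and (as is classical from the symplectic nature of the Jacobi equation) Lagrangian subbundles.

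The curvature hypothesis $K\geq -K_0^2$ enters through Rauch's comparison theorem, giving a uniform upper bound $\|J(t)\|+\|J'(t)\|\leq C\,e^{K_0|t|}(\|J(0)\|+\|J'(0)\|)$ for every Jacobi field along every geodesic. This is the only substitute available for the compactness usually exploited: it forces the family of pushed-forward vertical Lagrangians $\{d\phi_{-t}V(\phi_t\theta)\}_{t\geq 0}$ to stay in a relatively compact subset of the Lagrangian Grassmannian at each point, so that subsequential limits exist along every (not necessarily recurrent) orbit.

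The heart of the argument is then to identify the Anosov subbundles with the \emph{Green} subbundles $E^+(\theta)=\lim_{t\to\infty}d\phi_{-t}V(\phi_t\theta)$ and $E^-(\theta)=\lim_{t\to\infty}d\phi_{t}V(\phi_{-t}\theta)$: the exponential contraction on $E^s$ combined with the Rauch upper bound should force $E^s=E^+$, and symmetrically $E^u=E^-$. By their construction, $E^\pm$ are transverse to the vertical subbundle, so this identification yields $E^s(\theta)\cap V(\theta)=\{0\}$ and $E^u(\theta)\cap V(\theta)=\{0\}$ for every $\theta\in SM$. With this transversality in hand, the conclusion is immediate: decomposing $\xi(0)=(0,J'(0))\in V(\theta_0)\setminus\{0\}$ as $\xi^s+\xi^u+cX$ in the Anosov splitting, the transversality forces $\xi^s=\xi^u=0$, hence $\xi(0)=cX(\theta_0)$; but $X$ is horizontal and $\xi(0)$ is vertical and non-zero, a contradiction. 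Equivalently, transversality to the vertical of a continuous invariant Lagrangian subbundle is exactly the condition under which the matrix Riccati equation along $\gamma$ admits a globally defined symmetric solution, ruling out conjugate points (Ma\~n\'e's key observation).

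The main obstacle is the core identification $E^s=E^+$, $E^u=E^-$. In the compact and finite-volume settings this step uses Poincar\'e recurrence (Ma\~n\'e, \cite{man:87}), and Knieper's partial result \cite{Kni:18} sidesteps the same gap with additional recurrence-type hypotheses. The real work here is to extract the Green limits quantitatively along arbitrary, possibly non-recurrent, orbits using only the Rauch bound and the uniform hyperbolicity of the flow; that is where the technical weight of the proof lies.
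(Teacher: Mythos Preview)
Your proposal is a strategy sketch, not a proof: you correctly isolate the target statement $E^{s(u)}(\theta)\cap V(\theta)=\{0\}$ for all $\theta$, and you correctly note that this implies the absence of conjugate points via Ma\~n\'e's Riccati argument. But you explicitly leave the central step --- the identification of $E^{s(u)}$ with Green-type limits --- as an unresolved ``obstacle'', and that gap \emph{is} the theorem. The Green-bundle route is moreover circular as you have written it: the existence of the limits $\lim_{t\to\infty}d\phi_{-t}V(\phi_t\theta)$, their invariance, and above all their transversality to $V$ are classically established \emph{assuming} no conjugate points. Subsequential limits in the (already compact) Lagrangian Grassmannian are free and have nothing to do with the Rauch bound; the Rauch bound by itself does not tell you such a limit is transverse to $V$ or coincides with $E^s$. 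A smaller slip: your ``immediate'' contradiction from $E^s\cap V=\{0\}$ and $E^u\cap V=\{0\}$ is invalid --- these two conditions do \emph{not} force $\xi^s=\xi^u=0$ for a vertical $\xi=\xi^s+\xi^u+cX$; only the Ma\~n\'e argument you cite afterward actually works.

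The paper avoids Green bundles entirely. It proves that the saturated good set $\Lambda^{u}=\{\theta:\phi^t(\theta)\notin\mathcal B^{u}\text{ for all }t\}$, where $\mathcal B^{u}=\{\theta:E^{u}(\theta)\cap V(\theta)\neq\{0\}\}$, is closed, open and nonempty in the connected manifold $SM$. Closedness is the standard Riccati bound. Openness is the heart of the paper: a \emph{Main Lemma} gives a uniform local bound on the first positive time $t^u_+(z)$ at which the orbit of $z$ (for $z$ near a fixed $\theta\notin\mathcal B^s$) hits $\mathcal B^u$. The bound comes from taking a stable and an unstable Jacobi field with the same initial value on the interval $[0,t^u_+(z))$ --- which is conjugate-point-free by the minimality of $t^u_+$ --- applying Green's inequality $\|J'\|\le A\|J\|$ to their difference, and playing this against the Anosov exponential growth of the unstable vector; combined with Knieper's transfer lemma (a stable zero forces a nearby unstable zero and vice versa) this makes $SM\setminus\Lambda^u$ closed. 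Nonemptiness exploits non-compactness concretely: any point carries a minimizing ray $\gamma_\theta:[0,\infty)\to M$, free of conjugate points on $(0,\infty)$, so $\phi^t(\theta)\notin\mathcal B^{s}\cup\mathcal B^{u}$ for all large $t$; the same quantitative bound then shows that a point far enough along this ray lies in $\Lambda^u$.
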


In Remark \ref{RNEW1}, we show that Theorem \ref{main1} still holds under a weaker condition on the sectional curvature.





It is worth noting that in our pursuit of results, we have not expanded upon any techniques utilized by Knieper in \cite{Kni:18}. Deliberately, we have avoided employing Ma\~n\'e's methodologies and have refrained from modifying his original proof. Instead, we have embarked on an extensive exploration of the index formula, which has revealed profound insights into the precise timing of the emergence of conjugate points.\\
\indent In the original result by Klingenberg \cite{kli:74} for the compact case and by Ma\~n\'e for finite volume \cite{man:87}, the recurrence of the geodesic flow was a crucial property in establishing the absence of conjugate points for Anosov geometries. Our result is very surprising, as we do not require any condition on the recurrence of the geodesic flow to obtain geometries without conjugate points.

\paragraph{Paper Structure:} This paper follows the subsequent outline: In Section \ref{Not_Pre}, we delve into the fundamental concept of geodesic flow and symplectic geometry concerning the unitary tangent bundle. Section \ref{Classic results} presents pivotal classical findings essential for constructing our arguments. The longest and most crucial section, Section \ref{Intersections}, explores profound properties of the points where the vertical and stable (unstable) bundles intersect non-trivially, intricately linked with Section \ref{Classic results}. Finally, in Section \ref{Main-Results}, we establish the proofs for Theorem \ref{main1} and Theorem \ref{main2}.


\section{Notation and Basic Concepts}\label{Not_Pre}

Throughout the rest of this paper, $M=(M, \langle \, , \, \rangle)$ will denote a complete Riemannian manifold without boundary and dimension $m\geq 2$.  We denote by $TM$ the tangent bundle and $SM$ its unit tangent bundle.


\subsection{Geodesic flow}\label{GeoFlow}
For a given $\theta=(p,v) \in TM$, {we define} $\gamma_{_{\theta}}(t)$ {as} the unique geodesic with initial conditions $\gamma_{_{\theta}}(0)=p$ and 
$\gamma_{_{\theta}}'(0)=v$. For a given $t\in \mathbb{R}$, let $\phi^t:TM \to TM$ be the diffeomorphism given by 
$\phi^{{t}}(\theta)=(\gamma_{_{\theta}}(t),\gamma_{_{\theta}}'(t))$. Recall that this family is a flow (called the \textit{geodesic flow}) in the sense that  $\phi^{t+s}=\phi^{t}\circ \phi^{s}$ for all $t,s\in \mathbb{R}$. 

Let $V:=\operatorname{ker}\, D\pi$ be the \textit{vertical} {subbundle} of $T(TM)$ (tangent bundle of $TM$),  where $\pi\colon TM \to M$ is the canonical projection. \\
Let $K\colon T(TM)\to TM$ be the Levi-Civita connection map of $M$ and $H:=\operatorname{ker} K$ be the horizontal subbundle. The map {$K$} is defined as follows: Let $\xi \in T_{_{\theta}}TM$ and $z:(-\epsilon,\epsilon) \rightarrow TM$  be a curve adapted to $\xi$, \emph{i.e.},  $z(0) = \theta$ and $z'(0) = \xi$, where $z(t) = (\alpha(t),Z(t))$, then
$$K_{_{\theta}}(\xi)=\nabla_{\frac{\partial}{\partial\,t}}Z(t)\Big|_{t=0}.$$


{For each $\theta$, the maps $D_{\theta}\pi|_{H(\theta)}: H(\theta) \rightarrow T_pM$ and $K_{_{\theta}}|_{V(\theta)}:V(\theta) \rightarrow T_pM$ are linear isomorphisms. Furthermore, $T_{_{\theta}}TM = H(\theta) \oplus V(\theta)$ and the map  $j_{_{\theta}}:T_{_{\theta}}TM \rightarrow T_pM \times T_pM$ given by 
	$$ j_{_{\theta}}(\xi) = (D_{_{\theta}}\pi(\xi),K_{_{\theta}}(\xi)),        $$
	is a linear isomorphism. }

{Using the decomposition $T_{_{\theta}}TM = H(\theta) \oplus V(\theta)$, we can identify a vector $\xi \in T_{_{\theta}} TM$ with the pair of vectors $D_{_{\theta}}\pi(\xi)$ and $K_{_{\theta}}(\xi)$ in $T_{p}M$. The \textit{Sasaki metric} is a metric that makes $H(\theta)$ and $V(\theta)$ orthogonal and is given by}
$$ g_{_{\theta}}^{S}(\xi,\eta) = \langle D_{_{\theta}}\pi(\xi), D_{_{\theta}}\pi(\eta)  \rangle  +  \langle K_{_{\theta}}(\xi),    K_{_{\theta}}(\eta)   \rangle. $$
Observe that $SM$ is invariant by $\phi^t$, thus,  from now on, we consider $\phi^t$ restricted to $SM$ and $SM$ endowed with the  Sasaki metric.\\
The types of geodesic {flows} that we discuss in this paper are the Anosov geodesic flows, whose definition follows below.\\
We say that the geodesic flow $\phi^t:SM \rightarrow SM$ is Anosov (with {respect to} the Sasaki metric on $SM$) if $T(SM)$ has a splitting $T(SM) = E^s \oplus \langle G \rangle \oplus E^u $ such that 
\begin{eqnarray*}
	d\phi^t_{\theta} (E^s(\theta)) &=& E^s(\phi^t(\theta)),\\
	d\phi^t_{\theta} (E^u(\theta)) &=& E^u(\phi^t(\theta)),\\
	||d\phi^t_{\theta}\big{|}_{E^s}|| &\leq& C \lambda^{t},\\
	||d\phi^{-t}_{\theta}\big{|}_{E^u}|| &\leq& C \lambda^{t},
\end{eqnarray*}
for all $t\geq 0$ with $ C > 0$ and $0 < \lambda <1$,  where $G$ is the vector field derivative of the geodesic flow.

\subsection{Jacobi fields and the differential of
the geodesic flow}
The Jacobi fields are important geometrical tools to understand the behavior of the differential of the geodesic flow. A vector field $J$ along $\gamma_{_{\theta}}$ is called the \emph{Jacobi field} if it satisfies the equation 
\begin{equation*}
J'' + R(\gamma'_{_{\theta}},J)\gamma'_{_{\theta}} = 0,
\end{equation*}
where $R$ is the Riemann curvature tensor of $M$ and $``\,'\,"$ denotes the covariant derivative along $\gamma_{_{\theta}}$. \\
For  $\theta=(p,v)$ and  $\xi=(w_{1},w_2) \in T_{_{\theta}}SM$, (the horizontal and vertical decomposition) with $w_1, w_2 \in T_{p} M$ and $\langle v, w_2 \rangle =0$. It is known that
\begin{equation}\label{eq4}
d\phi^{t}_{\theta}(\xi) = (J_{_{\xi}}(t), J'_{\xi}(t)),
\end{equation}  
where $J_{_{\xi}}$ denotes the unique Jacobi vector field along $\gamma_{_{\theta}}$ such that $J_{_{\xi}}(0) = w_1$ and $J'_{\xi}(0) = w_2$ (see \cite{P}). The equation (\ref{eq4}) enables us to assert that the investigation of the dynamics of the geodesic flow revolves around Jacobi fields.
 
Another crucial concept, intimately tied to this work, is that of conjugate points.

\begin{Defi}
Two points $p$ and $q$ on a Riemannian manifold to be \emph{conjugate} if there exists a geodesic $\gamma$ connecting $p$ and $q$, along which there exists a non-zero Jacobi field that vanishes at both $p$ and $q$. When no two points in $M$ are conjugate, we say that the manifold $M$ \emph{has no conjugate points}.
\end{Defi}
Geometries without conjugate points are fundamental in mathematics, particularly in differential geometry and dynamical systems. They provide insights into the global structure of Riemannian manifolds, influencing the behavior of geodesics. The absence of conjugate points allows geodesics to extend indefinitely without intersecting themselves in the universal covering impacting various mathematical analyses and theorems related to geodesic flows, curvature, and topology. Additionally, in these geometries, the universal cover is an Euclidean space, further illustrating their significance and utility in mathematical investigations.\\
As mentioned in the introduction, metrics whose geodesic flow is Anosov, in the compact case \cite{kli:74} and in the case of finite volume \cite{man:87}, exhibit a geometry without conjugate points.

\subsection{Symplectic geometry}
The Riemannian {geometry} provides to unitary tangent bundle a natural symplectic structure using the horizontal and vertical decomposition given in Subsection \ref{GeoFlow}.\\
We define a symplectic form $\Omega$ {and  one-form $\beta$}  given by
$$   \Omega_{\theta}(\xi, \eta) =  \langle   D_{_{\theta}}\pi(\xi), K_{_{\theta}}(\eta) \rangle - \langle   D_{_{\theta}}\pi(\eta), K_{_{\theta}}(\xi) \rangle,                                  $$


$$\beta_{_{\theta}}(\xi) = g_{_{\theta}}^S(\xi,G(\theta)) = \langle D_{_{\theta}}\pi(\xi),v \rangle_p.$$
Observe that $\operatorname{ker}\, \beta_{_{\theta}}\supset V(\theta) \cap T_{_{\theta}} SM $.  It is possible to prove that a vector $\xi \in T_{_{\theta}}TM$ lies in $T_{_{\theta}}SM$ 
with $\theta=(p,v)$ if and only if $\langle K_{_{\theta}}(\xi) , v\rangle = 0$. Furthermore, 
$\beta$ is an invariant contact form by the geodesic flow whose Reeb vector field is the geodesic vector field $G$. 

The subbundle $S=\operatorname{ker} \beta$ is the orthogonal complement of the subspace spanned by $G$. Since $\beta$ is invariant by the geodesic flow, then the subbundle $S$ is invariant by {$\phi^{t}$}, \emph{i.e.}, $d\phi^t(S(\theta)) = S(\phi^{t}(\theta))$ for all $\theta\in SM$ and all $t\in\mathbb{R}$.

It is known that the restriction of $\Omega_{\theta}$ to $S(\theta)$ is nondegenerate and invariant by $\phi^{t}$ (see \cite{P} for more details).

\subsection{Graphs and Riccati equation}\label{NCP}
For  $\theta = (p, v) \in SM$, let $N(\theta):= \{w \in T_{x}M : \langle w, v\rangle = 0\}$. By the identification of Subsection \ref{GeoFlow}, we can write $S(\theta):=\operatorname{ker} \, \beta = N(\theta)\times N(\theta)$, $V(\theta) \cap S(\theta) =  \{0\} \times N(\theta) $ and $H(\theta) \cap S(\theta) = N(\theta) \times  \{0\} $.

\begin{Defi}
	A subspace $E \subset S(\theta)$ with $\dim E = m-1$ is said to be Lagrangian if  $\Omega_{\theta}(\xi, \eta) = 0$ for any $\xi, \eta \in S(\theta)$.
\end{Defi}
{The Lagrangian {subbundles} play an important role in this paper (see Lemma \ref{Discrete Intersection}), since in the Anosov case, it is known that  for each $\theta\in SM$, the {subspace} $E^{s}({\theta})$ and {the subspace} $E^{u}(\theta)$ are Lagrangian (cf. \cite{man:87} and \cite{P}).}

 Observe that  if $E \subset S(\theta)$ is a subspace {with}
$\dim E = m-1$ and $E \cap V(\theta) = \{0\}$, then 
$E \cap (H(\theta) \cap S(\theta))^{\perp} = \{0\}$. Hence, there exists a unique linear map $T: H(\theta) \cap S(\theta) \to  V(\theta) \cap S(\theta)$ such that $E$ is the graph of  $T$. In other words, there exists a unique linear map $T: N(\theta) \to N(\theta)$ such that $E = \{(v,Tv) : v \in N(\theta)\}$. Furthermore, the linear map $T$ is symmetric if and only if $E$ is Lagrangian.

Let $E$ be an invariant Lagrangian subbundle, i.e, for every $\theta \in SM$, $E(\theta) \subset S(\theta)$ is a Lagrangian subspace and $d\phi^t(E(\theta)) = E(\phi^{t}(\theta))$, for all $t \in \mathbb{R}$. Suppose that $E(\phi^{t}(\theta)) \cap V(\phi^{t}(\theta)) = \{0\}$ for every $t \in (-\delta, \delta)$. We can to write $E(\phi^{t}(\theta)) = {\rm{graph}}\, U(t)$ for all $t \in (-\delta, \delta)$, with $U(t): N(\phi^{t}(\theta)) \to N(\phi^{t}(\theta))$ which satisfies the Ricatti equation 
\begin{equation*}\label{R}
  U'(t)+ U^2(t) + R(t) = 0,
\end{equation*}
for more details see  \cite{Gre:58},  \cite{Ebe:73} or \cite[Section 2]{Italo-Sergio-2}.
  \section{Classical results }\label{Classic results}
  In this section, we introduce key results and concepts crucial for proving our main theorem. The first lemma, attributed to Ma\~n\'e (cf. \cite{man:87} and \cite{P} for further details), reveals a ``twist property" between the vertical subbundle and a Lagrangian subbundle along orbits. Specifically,
 
\begin{lemma}{\emph{\cite[Lemma III.2]{man:87}}}\label{Discrete Intersection}
	If $\theta \in SM$ and $E \subset S(\theta)$ is a Lagrangian subspace, then the set of \, $t \in \mathbb{R}$ such that $	d\phi^t_{\theta} (E) \cap V (\phi^t({\theta}) ) \neq \{0\}$ is discrete.
\end{lemma}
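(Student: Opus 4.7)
My plan is to translate the question into one about the zeros of a matrix-valued determinant and to exploit the Lagrangian identity $X^{T}Y=Y^{T}X$ to show that each such zero has finite order. Choose a basis $\xi_1,\ldots,\xi_{n-1}$ of $E$ and a parallel orthonormal frame $e_1(t),\ldots,e_{n-1}(t)$ of $N(\phi^t(\theta))$ along $\gamma_\theta$. Writing the Jacobi fields $J_i(t):=J_{\xi_i}(t)$ and their covariant derivatives in this frame produces $(n-1)\times(n-1)$ matrices $X(t)$ and $Y(t)$ that satisfy $X'=Y$ and $Y'=-R(t)X$, where $R(t)$ is the symmetric matrix of the operator $u\mapsto R(\gamma'_\theta(t),u)\gamma'_\theta(t)$ in the chosen frame. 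By (\ref{eq4}), the condition $d\phi^{t}_{\theta}(E)\cap V(\phi^{t}(\theta))\neq\{0\}$ is equivalent to $\det X(t)=0$, so it suffices to prove that the zeros of $\det X$ are isolated.

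Since $\phi^t$ preserves $\Omega$, the subspace $d\phi^{t}_{\theta}(E)$ is Lagrangian for every $t$; in matrix form this is the identity $X(t)^{T}Y(t)=Y(t)^{T}X(t)$. Fix $t_0$ with $\det X(t_0)=0$ and set $V_0:=\ker X(t_0)$, $W:=\mathrm{Im}\,X(t_0)$, $k:=\dim V_0\geq 1$. For any $v\in V_0$, this symmetry gives $X(t_0)^{T}Y(t_0)v=Y(t_0)^{T}X(t_0)v=0$, so $Y(t_0)v\in(\mathrm{Im}\,X(t_0))^{\perp}=W^{\perp}$. If moreover $Y(t_0)v=0$, then $d\phi^{t_0}_\theta$ would annihilate $\sum_i v_i\xi_i$, forcing $v=0$; hence $Y(t_0)|_{V_0}\colon V_0\to W^{\perp}$ is injective, and by equality of dimensions it is an isomorphism. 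Likewise $X(t_0)|_{V_0^\perp}\colon V_0^\perp\to W$ is an isomorphism.

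Now Taylor-expand $X(t_0+s)=X(t_0)+sY(t_0)+O(s^2)$. With respect to the orthogonal splittings $\mathbb{R}^{n-1}=V_0\oplus V_0^\perp$ on the source and $\mathbb{R}^{n-1}=W^\perp\oplus W$ on the target, this reads
$$X(t_0+s)=\begin{pmatrix} sA+O(s^{2}) & O(s) \\ O(s^{2}) & C+O(s) \end{pmatrix},$$
where $A=Y(t_0)|_{V_0}$ and $C=X(t_0)|_{V_0^\perp}$ are the isomorphisms found above, and the $(2,1)$ block is of order $s^2$ precisely because $Y(t_0)(V_0)\subset W^\perp$. A Schur-complement computation then yields $\det X(t_0+s)=s^{k}\,(\det A)(\det C)+O(s^{k+1})$, which is nonzero for $0<|s|$ sufficiently small, so $t_0$ is isolated in $\{t\in\mathbb{R}:\det X(t)=0\}$. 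The only substantive step is the use of the Lagrangian identity to force $Y(t_0)\bigl(\ker X(t_0)\bigr)\perp\mathrm{Im}\,X(t_0)$; once this ``twist'' is in hand the block-matrix computation of the determinant is routine.
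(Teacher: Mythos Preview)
The paper does not prove this lemma at all; it is listed in Section~3 as a classical result of Ma\~n\'e and simply cited from \cite[Lemma III.2]{man:87} and \cite{P}. So there is nothing in the paper to compare your argument against directly.

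That said, your proof is correct and is essentially the standard one found in those references. The key observation is exactly what you isolate: the Lagrangian condition, preserved by the flow, gives the Wronskian identity $X^{T}Y=Y^{T}X$, and applying it to $v\in\ker X(t_0)$ yields $X(t_0)^{T}Y(t_0)v=0$, i.e.\ $Y(t_0)v\in(\mathrm{Im}\,X(t_0))^{\perp}$. Together with the fact that $(X(t_0)v,Y(t_0)v)=(0,0)$ would force the corresponding Jacobi field to vanish identically, this makes both diagonal blocks $A=Y(t_0)|_{V_0}$ and $C=X(t_0)|_{V_0^{\perp}}$ invertible, and your Schur-complement computation then shows $\det X(t_0+s)=s^{k}\det A\,\det C+O(s^{k+1})$, so the zero at $t_0$ is isolated (of exact order $k=\dim\ker X(t_0)$). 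Each step checks out; the only cosmetic point is that $\det A$ and $\det C$ depend on the choice of orthonormal bases for $V_0,V_0^{\perp},W,W^{\perp}$, but since the corresponding change-of-basis matrices are orthogonal this affects the leading coefficient only by a sign and does not touch the conclusion.
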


Recall that a geodesic arc $\gamma:[a,b] \to M$ is devoid of conjugate points if, for any Jacobi vector field with $J(c) = 0$ and $J'(c) \neq 0$, it holds that $J(t) \neq 0$ for all $t \in [a,b]$ except $t = c$. In the subsequent theorem, Ma\~n\'e established a correlation between the trivial intersection of the vertical subbundle and a Lagrangian subbundle and the absence of conjugate points.

\begin{lemma}\emph{\cite[Proposition II.1]{man:87}}\label{Mane Lemma}
Let $M$ be a Riemannian manifold and $\gamma \colon [0,a] \to M$ a geodesic arc. If there exists a Lagrangian subspace $E\subset S(\gamma(0), \gamma'(0))$ such that $V(\gamma(t), \gamma'(t))\cap d\phi^{t}(E)=\{0\}$ for all $0\leq t\leq a$, then the geodesic arc $\gamma$ does not contain conjugates points. 
\end{lemma}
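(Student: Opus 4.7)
The plan is to argue by contradiction. Suppose there exist $0\le c_{1}<c_{2}\le a$ and a nontrivial Jacobi field $J$ along $\gamma$ with $J(c_{1})=J(c_{2})=0$. My first step would be to observe that the $\gamma'$-component $\langle J,\gamma'\rangle$ is affine in $t$ (since $\langle J,\gamma'\rangle''=-\langle R(\gamma',J)\gamma',\gamma'\rangle=0$), so its vanishing at two distinct points forces $J\perp\gamma'$ and hence $J(t),\,J'(t)\in N(\phi^{t}(\theta_{0}))$ for every $t$, where $\theta_{0}:=(\gamma(0),\gamma'(0))$. Moreover, nontriviality forces $J'(c_{1})\ne 0$.

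Next I would invoke the hypothesis together with the discussion at the end of Subsection~\ref{NCP}: because $E\subset S(\theta_{0})$ is Lagrangian and $D\phi^{t}(E)\cap V(\phi^{t}(\theta_{0}))=\{0\}$ for every $t\in[0,a]$, the subbundle $D\phi^{t}(E)$ is the graph of a symmetric operator $U(t)\colon N(\phi^{t}(\theta_{0}))\to N(\phi^{t}(\theta_{0}))$, defined on the whole of $[0,a]$, which solves the Riccati equation $U'+U^{2}+R=0$ of (\ref{R}).

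The heart of the argument is to compare $J$ against $U$ through the test function
\[
\psi(t):=\langle J'(t),J(t)\rangle-\langle U(t)J(t),J(t)\rangle .
\]
Differentiating and simplifying using the Jacobi equation $J''=-R(\gamma',J)\gamma'$, the Riccati equation for $U$, and the symmetry of $U(t)$, the curvature terms cancel and one is left with the non-negative expression
\[
\psi'(t)=\|J'(t)-U(t)J(t)\|^{2}\ge 0 .
\]
Hence $\psi$ is non-decreasing on $[c_{1},c_{2}]$; but $\psi(c_{1})=\psi(c_{2})=0$ because $J$ vanishes at both endpoints, so $\psi\equiv 0$ on $[c_{1},c_{2}]$ and therefore $J'(t)=U(t)J(t)$ there. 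Applied to the initial condition $J(c_{1})=0$, uniqueness for the linear ODE $Y'=U(t)Y$ yields $J\equiv 0$ on $[c_{1},c_{2}]$, contradicting $J'(c_{1})\ne 0$.

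The step I expect to carry the main technical weight is the computation $\psi'=\|J'-UJ\|^{2}$. It is short, but it is precisely where the Lagrangian hypothesis enters, through the symmetry of $U$, and where the Riccati equation is used to annihilate the curvature term produced by $J''=-RJ$; any sign mismatch there destroys the monotonicity and so the whole argument. Everything else (the reduction to $J\perp\gamma'$ and the closing ODE uniqueness step) is routine.
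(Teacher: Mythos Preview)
The paper does not supply its own proof of this lemma: it is quoted as \cite[Proposition II.1]{man:87}, with the remark ``For an alternative proof of the above result, see \cite{Knieper}.''  So there is nothing in the paper to compare against line by line.

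Your argument is correct and is precisely the standard Riccati-comparison proof one finds in the references the paper cites (in particular in Knieper's survey).  The key points --- that $D\phi^{t}(E)$ remains Lagrangian (so $U(t)$ is symmetric), that $U$ satisfies the Riccati equation (\ref{R}) along the orbit, and the monotonicity computation $\psi'(t)=\|J'(t)-U(t)J(t)\|^{2}$ --- are all valid as written; the reduction to $J\perp\gamma'$ and the closing ODE-uniqueness step are routine, as you say.  One small remark: Subsection~\ref{NCP} states the Riccati equation for an invariant Lagrangian \emph{subbundle}, whereas here $E$ is a single Lagrangian subspace at $\theta_{0}$; but pushing it forward by $D\phi^{t}$ produces, by construction, an invariant Lagrangian family along the orbit, so the same derivation applies without change.
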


For an alternative proof of the above result, see  \cite{Knieper}. In the Anosov case, the stable ($E^s$) and unstable ($E^u$) subbundles  are invariant, continuous, and Lagrangian. In this way, the above results are valid for $E^{s}$ and $E^{u}$.

As an immediate consequence of Lemma \ref{Mane Lemma}  we obtain the following result.
\begin{C}\label{C - Transfer Prop}
Let $J$ be a nonzero Jacobi field along $\gamma_{\theta}$ such that  $J(a)=J(b)=0$ for $a<b$, then there are $c, d \in [a,b]$, a nonzero stable Jacobi filed $J^s$ and non zero unstable Jacobi field  $J^u$ such that $J^{s}(c)= 0$ {and} $J^u(d)=0$.
\end{C}



\indent When the manifold exhibits an Anosov geodesic flow and the curvature is bounded below, the stable and unstable subbundles possess two fundamental properties. The first, established by Knieper (cf. \cite{Knieper}), demonstrates that the zeros of stable and unstable Jacobi fields correspond to conjugate points. The second property arises from Green's method (cf. Green \cite{Gre:58}), which yields a uniform upper bound for the Riccati solution in instances where stable or unstable Jacobi fields lack zeros. To elaborate further,  
\begin{lemma}{\emph{\cite[Lemma 3.5]{Knieper}}}\label{Knieper Lemma}
	Let $M$ be a Riemannian manifold with curvature bounded below. 
	If the geodesic flow is Anosov {then} there exists a constant $\sigma$ with the following property. If
	$$E^{s}(\theta)\cap V(\theta)\neq \{0\}$$ 
	then $\gamma_\theta$ has conjugate points on the interval $[- 1, \sigma]$. If $$E^{u}(\theta)\cap V(\theta)\neq \{0\}$$ then  $\gamma_\theta$ has conjugate points on the interval $[-\sigma, 1]$. 
\end{lemma}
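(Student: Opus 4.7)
I plan to argue by contradiction, exploiting the interaction between the Riccati equation for the stable Lagrangian $E^s$ and the Anosov exponential contraction. Time reversal (replacing $\gamma_\theta$ by $-\gamma_\theta$ and swapping the roles of $E^s$ and $E^u$) reduces the statement for $E^u$ to the one for $E^s$, so I focus on $E^s$: assume $E^s(\theta)\cap V(\theta)\neq\{0\}$ while $\gamma_\theta$ has no conjugate points on $[-1,\sigma]$, and aim to bound $\sigma$ in terms of $k_0$, $C$, $\lambda$.

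Pick $\xi=(0,w)\in E^s(\theta)\cap V(\theta)$ with $|w|=1$ and let $J^s$ be the associated Jacobi field along $\gamma_\theta$; then $J^s(0)=0$, $(J^s)'(0)=w$, and the Anosov contraction together with (\ref{eq4}) gives
$$ |J^s(t)|^{2}+|(J^s)'(t)|^{2}\le C^{2}\lambda^{2t} \qquad (t\ge 0). $$
Lemma~\ref{Discrete Intersection} applied to the invariant Lagrangian $E^s(\theta)$ implies that $E^s(\phi^t\theta)\cap V(\phi^t\theta)=\{0\}$ on a punctured neighbourhood of $t=0$, and the no-conjugate-points hypothesis together with Lemma~\ref{Mane Lemma} extends this triviality to all of $(0,\sigma]$. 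There $E^s(\phi^t\theta)$ is the graph of a symmetric operator $U^s(t)$ satisfying the Riccati equation (\ref{R}), and since $(J^s)'(t)=U^s(t)J^s(t)$ with $J^s(t)\to 0$ while $(J^s)'(t)\to w$, $U^s(t)$ must have an eigenvalue diverging to $+\infty$ along the parallel transport of $w$ as $t\to 0^+$.

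Testing the Riccati equation against a parallel unit vector field $e(t)$ and using the curvature hypothesis $R\ge -k_0^{2}I$ gives the scalar inequality $u'\le -u^{2}+k_0^{2}$ for $u(t):=\langle U^s(t)e(t),e(t)\rangle$. Comparison with the maximal solution $v(t)=k_0\coth(k_0 t)$ of $v'=-v^{2}+k_0^{2}$, $v(0^+)=+\infty$, yields $\lambda_{\max}(U^s(t))\le k_0\coth(k_0 t)$ on $(0,\sigma]$. On the other hand, the Anosov decay $|J^s(T)|\le C\lambda^{T}$ combined with $(\log|J^s|)'(t)=\langle U^s(t)J^s(t),J^s(t)\rangle/|J^s(t)|^{2}$ forces the Rayleigh quotient of $U^s$ along $J^s/|J^s|$ to dip below $-k_0$ at some $t_0\in(0,\sigma)$ once $\sigma$ exceeds an explicit threshold depending on $k_0$, $C$, $\lambda$; since this Rayleigh quotient is an upper bound for $\lambda_{\min}(U^s(t_0))$, this gives $\lambda_{\min}(U^s(t_0))<-k_0$. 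The same Riccati inequality $u'\le -u^{2}+k_0^{2}$ applied to this negative eigenvalue forces blow-down to $-\infty$ in finite time, which via the graph identification corresponds to a zero of a stable Jacobi field in $(0,\sigma]$, hence to a conjugate point along $\gamma_\theta$ in that interval, contradicting the hypothesis.

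The most delicate step — and the main obstacle — is the conversion of the asymptotic Anosov decay of $|J^s|$ into a \emph{pointwise} bound $\lambda_{\min}(U^s(t_0))<-k_0$ with $t_0$ controlled by $\sigma$: the curvature lower bound only yields one-sided Riccati control, so the negative spectrum must be traced through the explicit interplay between the decay rate of $|J^s|$ and the Riccati comparison. Quantifying exactly when the negative eigenvalue is forced below $-k_0$, rather than merely remaining bounded, is the technical core of Knieper's argument and the place where both hypotheses (curvature lower bound \emph{and} Anosov contraction) are essential.
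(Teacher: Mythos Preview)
The paper does not prove this lemma; it is quoted verbatim from Knieper \cite[Lemma~3.5]{Knieper} as one of the ``classical results'' and no argument is supplied. So there is no in-paper proof to compare against, and your proposal must be judged on its own merits.

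There is a genuine gap in your reduction. You write that ``the no-conjugate-points hypothesis together with Lemma~\ref{Mane Lemma} extends this triviality to all of $(0,\sigma]$'', meaning $E^{s}(\phi^{t}\theta)\cap V(\phi^{t}\theta)=\{0\}$ for every $t\in(0,\sigma]$. But Lemma~\ref{Mane Lemma} runs the other way: it says that \emph{if} a Lagrangian stays transverse to the vertical on $[0,a]$ \emph{then} the arc has no conjugate points. It gives you no implication from ``no conjugate points'' back to ``$E^{s}\cap V=\{0\}$''. Concretely, the assumption that $\gamma_{\theta}|_{[-1,\sigma]}$ has no conjugate points only guarantees that the particular field $J^{s}$ with $J^{s}(0)=0$ has no further zero on $(0,\sigma]$; it does \emph{not} preclude some other stable Jacobi field from vanishing at a point $t_{1}\in(0,\sigma]$, and hence does not guarantee that the stable Riccati operator $U^{s}(t)$ exists on that whole interval. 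Your subsequent use of $(J^{s})'(t)=U^{s}(t)J^{s}(t)$ and the blow-down argument therefore rests on an unjustified premise. (A natural repair is to work instead with the Riccati operator attached to the Lagrangian $D\phi^{t}(V(\theta))$ of Jacobi fields vanishing at $0$: that one \emph{is} defined on $(0,\sigma]$ precisely by the no-conjugate-points assumption, and $J^{s}$ lies in it; this is closer to how Eberlein's Lemma~\ref{eberlein} is deployed in the paper.)

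Even granting the Riccati setup, the step you yourself flag as ``the most delicate'' is not carried out: you assert that the Anosov decay of $|J^{s}|$ forces $\lambda_{\min}(U^{s}(t_{0}))<-k_{0}$ at some controlled $t_{0}$, but you give no mechanism for this. Averaging $(\log|J^{s}|)'$ only yields a Rayleigh quotient below $\log\lambda$, which need not be below $-k_{0}$, so an additional idea is required. Finally, the closing sentence conflates ``a stable Jacobi field has a zero in $(0,\sigma]$'' with ``$\gamma_{\theta}$ has a conjugate point in that interval''; a single zero of a Jacobi field is not a conjugate point, so the contradiction you reach is with your (unjustified) transversality claim, not with the no-conjugate-points hypothesis.
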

As $E^s$ and $E^u$ are Lagrangian, then using the notation of {Subsection} \ref{NCP}, we have 
\begin{lemma}\label{Closed}
Let $M$ be a Riemannian manifold with curvature bounded below by $-k^{2}$. Assume that the geodesic flow is Anosov, then if $\theta\in SM$ satisfies that 
$$E^s(\phi^{t}(\theta))\cap V(\phi^t(\theta))=\{0\} \,\,\, \text{for all} \,\, t\in \mathbb{R},$$
then $$\displaystyle \sup_{t\in \mathbb{R}}\|U^s_{\theta}(t)\|\leq k,$$
where $U^s_{\theta}(t)\colon N(\phi^t(\theta))\to N(\phi^t(\theta))$   is the symmetric linear map such that $E^{s}(\phi^{t}(\theta)) = {\rm{graph}}\, U^s_{\theta}(t)$. An analogous result holds for the unstable case.
\end{lemma}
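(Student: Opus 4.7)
Under the hypothesis $E^s(\phi^t(\theta))\cap V(\phi^t(\theta))=\{0\}$ for every $t\in\mathbb{R}$, Subsection \ref{NCP} shows that $U^s_\theta(t)$ is a globally defined symmetric operator on $N(\phi^t(\theta))$ satisfying the Riccati equation $(U^s_\theta)'+(U^s_\theta)^2+R=0$. My plan is the classical Green argument: attach a scalar function $u$ to $U^s_\theta$ along $\gamma_\theta$, derive a scalar Riccati-type inequality $u'\leq -u^2+k^2$, and compare with the explicit ODE $y'=-y^2+k^2$ to force blow-up in finite time whenever $|u|>k$, contradicting the fact that $U^s_\theta$ is globally defined (hence has finite norm) on $\mathbb{R}$.

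Suppose for contradiction that $\|U^s_\theta(t_0)\|>k$ for some $t_0\in\mathbb{R}$. Since $U^s_\theta(t_0)$ is symmetric, choose a unit eigenvector $e\in N(\phi^{t_0}(\theta))$ whose eigenvalue $\mu$ satisfies $|\mu|>k$. Parallel-transport $e$ along $\gamma_\theta$ to obtain a unit parallel field $E(t)\in N(\phi^t(\theta))$ and set $u(t):=\langle U^s_\theta(t)E(t),E(t)\rangle$. Differentiating and using in order the parallelism of $E$, the Riccati equation, the symmetry of $U^s_\theta$ (which gives $\langle(U^s_\theta)^2 E,E\rangle=\|U^s_\theta E\|^2$), the Cauchy--Schwarz inequality $\|U^s_\theta E\|^2\geq\langle U^s_\theta E,E\rangle^2$ valid for the unit vector $E$, and the lower curvature bound $\langle R(t)E(t),E(t)\rangle\geq -k^2$, one obtains
$$u'(t)\leq -u(t)^2+k^2,\qquad u(t_0)=\mu.$$

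The explicit solution of $y'=-y^2+k^2$ issuing from $(t_0,\mu)$ with $\mu>k$ blows up to $+\infty$ as $t$ decreases through the finite value $t_0-\frac{1}{2k}\log\frac{\mu+k}{\mu-k}$, and the symmetric case $\mu<-k$ blows down to $-\infty$ in finite \emph{forward} time. The standard scalar comparison principle then forces $u$ itself to escape every compact set at some finite $t^{\ast}\in\mathbb{R}$, contradicting the a priori bound $|u(t)|\leq\|U^s_\theta(t)\|<\infty$ which holds for every $t\in\mathbb{R}$ by hypothesis. Hence $\|U^s_\theta(t)\|\leq k$ for every $t$, and the unstable case follows verbatim by reversing time. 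I expect the only delicate bookkeeping to be the direction of the comparison --- backward for $\mu>k$, forward for $\mu<-k$ --- so that the blow-up really occurs inside the interval on which $U^s_\theta$ is known to be defined; everything else is a routine Riccati estimate.
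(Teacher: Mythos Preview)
Your argument is correct and is precisely the classical Green comparison that the paper invokes by citation (to Green \cite{Gre:58} and Ma\~n\'e \cite{man:87}) without writing out; since the paper does not supply its own proof, your scalar Riccati inequality $u'\le -u^2+k^2$ together with the explicit blow-up of the comparison solution is exactly what is meant by ``the Green method''. The directional bookkeeping you flag (backward blow-up for $\mu>k$, forward for $\mu<-k$) is handled correctly, and the contradiction with the global finiteness of $U^s_\theta$ is clean.
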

\noindent We also need a result due to  Eberlein (cf. \cite{Ebe:73}).
\begin{lemma}{\emph{\text{\cite[Lemma 2.8]{Ebe:73}}}}\label{eberlein}
For any integer $n>2$ consider the $(n-1)\times(n-1)$ matrix
\emph{Riccati equation}

\begin{equation}\label{Ricatti}
U'(s) + U^{2}(s) + R(s)= 0,
\end{equation}
where $R(s)$ is a symmetric matrix such that $\langle R(s)x, x \rangle > -k^2$
 for some $k > 0$,
all unit vectors $x\in \mathbb{R}^{n-1}$ and all real numbers $s$. If $U(s)$ is a symmetric solution of \emph{(\ref{Ricatti})}, which is defined for all $s > 0$, then $\langle U(s)x, x \rangle < k \coth (ks)$ for
all $s > 0$ and all unit vectors $x\in \mathbb{R}^{n-1}$.
\end{lemma}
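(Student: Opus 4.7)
The plan is to reduce the matrix Riccati equation to a one-dimensional comparison problem. Fix a unit vector $x \in \mathbb{R}^{n-1}$ and set $u(s) := \langle U(s)x, x\rangle$. Since $U(s)$ is symmetric, Cauchy--Schwarz gives $\|U(s)x\|^2 \geq \langle U(s)x, x\rangle^2 = u(s)^2$. Differentiating $u$ along the Riccati equation $U' = -U^2 - R$ and invoking the strict curvature bound $\langle R(s)x, x\rangle > -k^2$, I obtain
$$u'(s) = -\|U(s)x\|^2 - \langle R(s)x, x\rangle < k^2 - u(s)^2,$$
so $u$ is a strict subsolution of the scalar Riccati $v' = k^2 - v^2$, whose ``singular'' solution $w(s) = k\coth(ks)$ is defined on $(0,\infty)$, blows up as $s \to 0^+$, and tends to $k$ as $s \to \infty$.

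The heart of the proof is a backward comparison argument. Suppose for contradiction that $u(s_1) \geq w(s_1)$ for some $s_1 > 0$, and let $v$ be the solution of $v' = k^2 - v^2$ with $v(s_1) = u(s_1)$. Since $u(s_1) > k$, this solution has the form $v(s) = k\coth(k(s-a))$ for some $a \geq 0$, and by strict monotonicity of $\coth$ one has $a > 0$ precisely when $u(s_1) > w(s_1)$ strictly. Applying a Gronwall-type argument to $\phi := u - v$ via the identity $v^2 - u^2 = -(u+v)(u-v)$ yields $u(s) > v(s)$ for every $s$ strictly less than $s_1$ in the maximal interval of definition of $v$. If $a > 0$, then $v(s) \to \infty$ as $s \to a^+$, forcing $u(s) \to \infty$ as well and contradicting the global existence of $U$ on $(0,\infty)$. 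The borderline case $u(s_1) = w(s_1)$ (so $a = 0$) is reduced to the strict case by noting that $(u - w)'(s_1) < 0$ whenever $(u-w)(s_1) = 0$, whence $u > w$ on some interval $(s_2, s_1)$, and the previous step applied at $s_2$ produces the same contradiction.

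The main technical obstacle is ensuring that the backward comparison is strict all the way down to the would-be pole of $v$; this hinges on the \emph{strict} curvature bound $\langle R(s)x, x\rangle > -k^2$, which upgrades the Riccati inequality for $u$ to a strict one and enables the strict comparison principle to propagate. Once this is in place, the incompatibility between the blow-up of the scalar bounding solution at a positive time and the assumed global existence of $U$ on $(0,\infty)$ closes the argument, yielding $\langle U(s)x, x\rangle < k\coth(ks)$ for all $s > 0$ and all unit vectors $x$.
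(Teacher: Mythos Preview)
The paper does not supply its own proof of this lemma; it simply recalls it as Eberlein's result \cite[Lemma 2.8]{Ebe:73} (see also Green \cite{Gre:58}) and uses it as a black box. Your argument is a correct and essentially standard proof of the statement: reducing to the scalar function $u(s)=\langle U(s)x,x\rangle$, using symmetry and Cauchy--Schwarz to obtain the strict differential inequality $u'<k^{2}-u^{2}$, and then comparing backwards with the explicit solution $k\coth(k(s-a))$ to force a blow-up at a positive time, contradicting the global existence of $U$ on $(0,\infty)$. The treatment of the borderline case $u(s_{1})=k\coth(ks_{1})$ via the strict inequality $(u-w)'(s_{1})<0$ is also fine. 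This is precisely the comparison method behind the Eberlein/Green proof, so there is nothing to contrast with the paper's own approach.
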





\section{The intersection between the vertical  subspace and $E^{s,u}$}\label{Intersections}
From Lemma \ref{Knieper Lemma} we need to avoid non-trivial intersection of the stable (unstable) bundle with the vertical bundle. So, we consider the following subset of $SM$

$$\B^{s,u}=\Big\lbrace\theta\in SM: V(\theta)\cap E^{s,u}(\theta)\neq \{0\}\Big \rbrace\textcolor{blue}{.}$$

The rest of this section is devoted to finding some properties of $\B^{s,u}$. More specifically, we will prove that $\B^{s,u}=\emptyset$ (see Corollary \ref{Cor1-Main1}, which implies the Theorem \ref{main1}). 

\subsection{Properties of sets $\B^{s}$ and $\B^u$}

\begin{lemma}\label{Le-Closed} The sets $\B^{s}, \B^{u}\subset SM$ are closed.
\end{lemma}
\begin{proof}
Let $\theta_n \in \B^{s,u}$ be, $\theta_n\to \theta$, then there is $z_n\in V(\theta_n)\cap E^{s,u}(\theta_n)$. As the spaces involved are subspaces, then we can assume that $\|z_n\|=1$. Thus, since $V(\theta_n)=\text{ker} \, d\pi_{\theta_n}$ and $E^{s,u}$ are continuous subbundles, then passing to a subsequence if necessary,   $z_n\to z\in V(\theta)\cap E^{s,u}(\theta)$, which implies that $\theta\in \B^{s,u}$. 
\end{proof}
\begin{R}\label{R1-Properties} Using the notation of \emph{Subsection} \emph{\ref{NCP}}, if $\theta\notin \B^{s,u}$, then there are unique linear maps  $T^{s,u}_{\theta}\colon {H(\theta) \cap S(\theta) \to V(\theta) \cap S(\theta) }$  such that $E^{s,u}(\theta)$ is the graph of $T^{s,u)}_{\theta}$, \emph{i.e.}, 
$$E^{s,u}(\theta)=\{(z,T^{s,u}_{\theta}(z)): z\in H(\theta)\}.$$
\end{R}
\noindent Using the horizontal and vertical coordinates from {Subsection} \ref{NCP}, the following lemma yields a local uniform control of the norm of the linear maps $T^{s,u}$, enabling us to govern the vertical coordinates through the horizontal ones. 
\begin{lemma}\label{stable-norm}
Assume that $\theta\notin \B^{s,u}$, then there is a compact neighborhood $\mathcal{U}_{\theta}^{s,u}\subset SM\setminus \B^{s,u}$ of $\theta$ and $\alpha_{s,u}(\theta)>0$ such that if $(v,w)\in E^{s,u}(z)$, then 
$$\|w\|\leq \alpha_{s,u}(\theta)\|v\| \, \, \, \text{for all}\, \, \,z \in \mathcal{U}_{\theta}^{s,u}.$$
\end{lemma}
\begin{proof}
We prove the stable case since the unstable case is analogous. \\
Let $\theta\notin \B^s$ be, then by Lemma \ref{Le-Closed} there is a compact neighborhood $\mathcal{U}_{\theta}^{s}\subset SM\setminus \B^s$ of $\theta$ such that  for all  $z\in \mathcal{U}_{\theta}^{s}$, we have that 
$$V(z)\cap E^{s}(z)=\{0\}.$$
Then, by Remark \ref{R1-Properties}, for each $z\in \mathcal{U}_{\theta}^{s}$ there is a unique linear map $$T^s_{z}\colon {H(z)\cap S(z)  \to V(z) \cap S(z),}$$ such that $E^s(z)$ is the graph of $T^s_{z}$. The compactness of {$\mathcal{U}_{\theta}^{s}$} and the continuity of $E^s$ allows us to state that there is $\alpha_{s}(\theta)>0$ such that $$\|T^{s}_{z}\|\leq \alpha_{s}(\theta) \, \, \,  \text{for all}\, \, \,  z\in {\mathcal{U}_{\theta}^{s}}.$$
Observe that if $(v,w)\in E^s(z)$, then $(v,w)=(v,T^s_{z}(v))$, which implies that 
$$\|w\|=\|T^{s}_{z}(v)\|\leq \alpha_{s}(\theta)\|v\|.$$
\end{proof}

In the context of $\B^{s,u}$, as a corollary of Lemma \ref{eberlein}, we have (compare with Lemma \ref{Closed}).
\begin{lemma}\label{Good t>0}  Let $M$ be a Riemannian manifold with curvature bounded below by $-k^2$, for some $k>0$, with Anosov geodesic flow.
Let $\theta\in SM$ such that $\phi^t(\theta)\notin \B^u$ for all $t \geq 0$. If $U^{u}$ is the symmetric solution of the Ricatti equation \emph{(\ref{Ricatti})} associated to the unstable bundle $E^u$ on $[0,+\infty)$, then
$$\langle U^{u}(t)x, x \rangle < k \coth (kt),$$
for all $t > 0$ and all unit vectors $x\in \mathbb{R}^{n-1}$. Analogous result for stable cases.
\end{lemma}
\begin{proof}
Note simply that if $\phi^t(\theta)\notin \B^u$ for {$t \geq 0$}, then $U^u(t)$ is defined for {$t \geq0$} and the result follows from  Lemma \ref{eberlein}.
\end{proof}
\begin{R}\label{R-Good t>0}In the same conditions of the previous lemma, observe that $\coth kt \leq 2 $ for all $t\geq \dfrac{1}{k}$ and then 
\begin{equation}\label{u-Ricatti}
\langle U^{s}(-t)x, x \rangle \leq 2k  \, \, \, \text{for all} \, \, \, t\geq \frac{1}{k} 
\end{equation}
and all unit vectors $x\in \mathbb{R}^{n-1}$. In particular, $\|U^{s}(-t)\| \leq  2k$ for all $t\geq \frac{1}{k}$.
\end{R}
Using the notation of Lemma \ref{stable-norm}, 
then an immediate consequence,  we have 
\begin{C}\label{Unif Bounded t_{+}*}
In the conditions of the previous lemma, let $\theta\in SM$ such that $\phi^{-t}(\theta)\notin \B^s$ for all $t\geq 0$. Then for all $t\geq \dfrac{1}{k}$, 
$$\alpha_{s}(\phi^{-t}(\theta))\leq 2k.$$
Analogous for the stable case. 
\end{C}
\begin{proof}
Assume that $t\geq \dfrac{1}{k}$ and consider $(v,w)\in E^s(\phi^{-t}(\theta))$, then by Lemma \ref{stable-norm} and Remark \ref{R-Good t>0}  we have that $\|w\|\leq 2k\|v\|$. 
Thus, $\alpha_{s}(\phi^{-t}(\theta))\leq 2k$, for all $t\geq \dfrac{1}{k}$. The unstable case is analogous.   
\end{proof}

It is important to observe that in {the} Anosov case, subbundles $E^{s}$ and $E^u$ are invariant and Lagrangian, therefore Lemma \ref{Discrete Intersection} can be {written} in terms of the sets $\B^{s,u}$. 
\begin{lemma}[Twist Property]\label{Discrete Intersection 1}
For each $\theta \in SM$, the  sets $\{t\in\mathbb{R}: \phi^{t}(\theta)\in \B^{s}\}$ and  $\{t\in\mathbb{R}: \phi^{t}(\theta)\in \B^{u}\}$ are discrete. 
\end{lemma}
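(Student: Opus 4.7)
The plan is to reduce the statement directly to Lemma \ref{Discrete Intersection} (Mañé's twist property for a fixed Lagrangian subspace) by exploiting the invariance of the stable and unstable subbundles under the geodesic flow.

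First I would fix $\theta \in SM$ and recall two facts from the Anosov setting already cited in the excerpt: the stable subspace $E^s(\theta) \subset S(\theta)$ is Lagrangian, and the stable subbundle is $\phi^t$-invariant, i.e. $d\phi^t_\theta(E^s(\theta)) = E^s(\phi^t(\theta))$ for every $t \in \mathbb{R}$. The analogous statements hold for $E^u$.

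Next I would rewrite the set under consideration using these two facts. By definition of $\mathcal{B}^s$,
\begin{equation*}
\{t \in \mathbb{R} : \phi^t(\theta) \in \mathcal{B}^s\} = \{t \in \mathbb{R} : V(\phi^t(\theta)) \cap E^s(\phi^t(\theta)) \neq \{0\}\},
\end{equation*}
and invariance of $E^s$ lets me substitute $E^s(\phi^t(\theta)) = d\phi^t_\theta(E^s(\theta))$, yielding
\begin{equation*}
\{t \in \mathbb{R} : \phi^t(\theta) \in \mathcal{B}^s\} = \{t \in \mathbb{R} : d\phi^t_\theta(E^s(\theta)) \cap V(\phi^t(\theta)) \neq \{0\}\}.
\end{equation*}
Since $E^s(\theta)$ is a Lagrangian subspace of $S(\theta)$, Lemma \ref{Discrete Intersection} applied with $E = E^s(\theta)$ says that exactly the right-hand set is discrete in $\mathbb{R}$. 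The argument for $\mathcal{B}^u$ is identical, using the invariance and Lagrangian character of $E^u$ instead.

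There is essentially no obstacle here beyond matching definitions: the entire content of the lemma is the translation of Mañé's fixed-subspace twist property into orbit language, made possible precisely because $E^s$ and $E^u$ are $\phi^t$-invariant Lagrangian subbundles. The only point worth writing carefully is the identification $E^s(\phi^t(\theta)) = d\phi^t_\theta(E^s(\theta))$ (and its $u$-analogue), which is what converts the pointwise condition defining $\mathcal{B}^{s(u)}$ into the hypothesis of Lemma \ref{Discrete Intersection}.
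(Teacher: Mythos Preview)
Your proposal is correct and matches the paper's approach exactly: the paper does not give a separate proof but simply remarks, just before stating the lemma, that since $E^{s(u)}$ are invariant and Lagrangian, Lemma~\ref{Discrete Intersection} can be rewritten in terms of the sets $\B^{s(u)}$. You have supplied precisely the details of that rewriting.
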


When the manifold has  curvature bounded below, thanks to Lemma \ref{Mane Lemma} and Lemma \ref{Knieper Lemma},  the sets $\B^{s,u}$ have the following special  property:
\begin{lemma}[Transfer Property]\label{Transfer Property}
If $\theta \in SM$ is such that there is $t_0$  with $\phi^{t_0}(\theta)\in \B^{s}$, then there {exists} $t_1\in [t_0 - 1 , t_0 + \sigma]$ such that $\phi^{t_1}(\theta)\in \B^{u}$, where $\sigma$ is as in Lemma \emph{\ref{Knieper Lemma}}. An analogous result for the unstable case.
\end{lemma} 
\begin{proof}
{ From Lemma \ref{Knieper Lemma} it follows that the geodesic arc $\gamma_{\theta}:[t_0 - 1 , t_0 + \sigma] \to M$
has conjugate points. Now suppose that $\phi^{t}(\theta) \notin \B^{u}$ for all $t\in [t_0 - 1 , t_0 + \sigma]$. Then, from Lemma \ref{Mane Lemma} follows that the geodesic arc $\gamma_{\theta}:[t_0 - 1, t_0 + \sigma] \to M$ does not contain conjugate points. This contradiction concludes the proof.}
\end{proof}

\subsection{Topological properties of the first positive (negative) time to $\B^{s,u}$}\label{First time}
In this section, we estimate the first moment that orbits intersect the sets  $\B^{s,u}$, which will be a fundamental tool to prove Theorem \ref{main1}.

We consider the special sets 

$$\B^{u}_{+}=\Big\lbrace\theta\in SM: \text{there is} \,\, t\geq 0 \,\, \text{with}\, \, \phi^{t}(\theta)\in \B^{u}\Big \rbrace.$$
and 
$$\B^{s}_{-}=\Big\lbrace\theta\in SM: \text{there is} \,\, t\leq 0 \,\, \text{with}\, \, \phi^{t}(\theta)\in \B^{s}\Big \rbrace.$$

\noindent For $\theta\in \B^{u}_{+}$, denote by $$t_{+}^{u}(\theta)=\ds\min\, \Big\lbrace t\geq 0: \phi^t(\theta)\in \B^{u}\Big\rbrace, \,\,\text{and} \,\, t_{-}^{u}(\theta)=\ds\inf\, \Big\lbrace t<0: \phi^t(\theta)\in \B^{u}\Big\rbrace.$$ 
Note that if $\phi^{-t}(\theta)\notin \B^{u}$ for all $t<0$, then $t_{-}^{u}(\theta)=-\infty$.
\noindent Analogously, if  $\theta\in \B^{s}_{-}$, denote by $$t_{-}^{s}(\theta)=\ds\max \, \Big\lbrace t\leq 0: \phi^t(\theta)\in \B^{s}\Big\rbrace \,\,\text{and}\,\, t_{+}^{s}(\theta)=\ds\inf \, \Big\lbrace t>0: \phi^t(\theta)\in \B^{s}\Big\rbrace.$$
Note that if $\phi^{t}(\theta)\notin \B^{s}$ for all $t>0$, then $t_{+}^{s}(\theta)=+\infty$. 

From Lemma \ref{Discrete Intersection 1} the times $t_{+}^{u}(\theta)$ and $t_{-}^{s}(\theta)$ are well defined. 

\begin{R}\label{RN1}
From the definition of $t_{-}^u(\theta)$ and $t_{+}^{u}(\theta)$, there is no unstable Jacobi field with zeros in $(t_{-}^u(\theta), t_{+}^u(\theta))$. Analogous result for the stable case.
\end{R}
\begin{Defi}

A subset $A\subset SM$ is called a Pointwise Negatively \emph{(}resp. Positively\emph{)}  Capturing Set if, for every $x\in A$ such that $\mathcal{O}^{-}(x)\subset A$ \emph{(}resp. $\mathcal{O}^{+}(x)\subset A$\emph{)} , it follows that $\mathcal{O}(x)\subset A$, where $\mathcal{O}^{-}(x)$ and $\mathcal{O}^{+}(x)$ denote the negative and positive orbits of $x$, respectively, and $\mathcal{O}(x)$ denotes the orbit of $x$.
\end{Defi}
It is worth noting that negatively or positively invariant sets are not always invariant and therefore cannot be considered Pointwise Negatively \emph{(}or Positively\emph{)} Capturing Sets. This is a rather special property found only in very specific cases of dynamical systems. The following theorem demonstrates that the complements of sets $\B^{u}$ and $\B^{s}$ possess this additional property. Moreover, it is pivotal as it allows us to deduce the absence of conjugate points along a geodesic if they are confined to only one side. Its proof ingeniously utilizes the special times $t_{+}^{u}(\theta)$ and $t_{-}^{s}(\theta)$.
\begin{T}\label{PNPCS}
The set $SM\setminus \B^{u}$ is Pointwise Negatively  Capturing Set and the set $SM\setminus\B^s$ is Pointwise  Positively Capturing Set.
\end{T}
The proof of the previous theorem is somewhat delicate and requires several additional lemmas. Moreover, it will be instrumental in demonstrating straightforwardly that $SM\setminus \B^{u}$ and $SM\setminus \B^{s}$ are non-empty for non-compact manifolds.\\
\ \\
For the rest of the paper, consider the \emph{Wroskian}  $W(J,Y)(t)$ of two Jacobi fields, $J$ and $Y$  defined by
$$W(J,Y)(t):=\langle J(t),Y'(t)\rangle - \langle J'(t),Y(t)\rangle.$$
It is not difficult to prove that $W(J,Y)(t)$ is a constant function, which is zero if and only if $J$ and $Y$ are linearly dependent.
\ \\
Additionally, the special times defined earlier pinpoint the initial interval where the geodesic remains free of conjugate points, as illustrated in the following lemma.

\begin{lemma}\label{First-Time}
If $\theta\in \B^{u}_{+}\setminus \B^u$ \emph{(}resp. $\theta\in \B^s_{-}\setminus \B^s$\emph{)} 
then $\gamma_{\theta}(t)$ has no conjugate on $(t_{-}^{u}(\theta),t_{+}^{u}(\theta)]$, \emph{(}resp. $[t_{-}^{s}(\theta), t_{+}^{s}(\theta))$\emph{)}.
\end{lemma}
\begin{proof} In the proof does not matter if $t_{-}^{u}(\theta)$ is finite or not. 
By contradiction, assume that there are $t_{-}^{u}(\theta)< a<b\leq t_{+}^{u}(\theta)$ and a nonzero Jacobi field $J$ along $\gamma_{\theta}$ such that $J(a)=J(b)=0$, then from Corollary \ref{C - Transfer Prop} there is a nonzero unstable Jacobi field $J^u$ along $\gamma_\theta$ and $c\in [a,b]$ such that $J^{u}(c)=0$. Then, since $\theta\in \B^{u}_{+}\setminus \B^u$ and the definition of $t_{-}^{u}(\theta)$ we have $c\in (0,t_{+}^{u}(\theta)]$. 
If $c<t_{+}^{u}(\theta)$ we have a contradiction with the minimality of $t_{+}^{u}(\theta)$. Otherwise, $b\geq c=t_{+}^{u}(\theta)\geq b$ and the Wroskian, $W(J, J^u)=0$. Thus it should be $J^u(a)=0$ with $t_{-}^{u}(\theta)<a<b=t_{+}^u(\theta)$, which provides a contradiction from Remark \ref{RN1}.
\end{proof}
\begin{C}\label{CN1'}
If $\theta\in \B^{u}_{+}\setminus \B^u$ \emph{(}resp. $\theta\in \B^s_{-}\setminus \B^s$\emph{)}, then $\gamma_{\theta}(t)$ has no conjugate on $[0,t_{+}^{u}(\theta)]$, \emph{(}resp. $[t_{-}^{s}(\theta), 0]$\emph{)}.
\end{C}
Before proving Theorem \ref{PNPCS}, we recall an important property regarding closed intervals without conjugate points (cf. \cite[Corollary 2.12]{Knieper}).

\begin{lemma}\label{Green-Lemma}
	Let $M$ be a Riemannian manifold with curvature bounded below by $-k^2$ and $T\geq 1$. Then there exists $A=A(k)>0$ such that if $\gamma\colon [-1,T+1]\to M$ is a geodesic arc without conjugate points and $J$, $-1\leq t\leq T+1$ is a perpendicular Jacobi field on $\gamma$ with $J(0)=0$ and 
	then $$\|J'(t)\|\leq A\|J(t)\|$$
for all $1\leq t\leq T$.
\end{lemma}

\begin{proof}[\textbf{\emph{Proof of Theorem \ref{PNPCS}}}]
Let us prove that $SM\setminus \B^u$ is pointwise negatively capturing set since the other case is analogous. Assume that $\mathcal{O}^{-}(\theta)\subset (SM\setminus \B^u)$. Then, considering a fixed number $\beta>1$ and put $\theta_{\beta}:=\phi^{-\beta}(\theta)\notin \B^u$ and  $\mathcal{O}^{-}(\theta_{\beta})\subset (SM\setminus \B^u)$. Moreover, from Lemma \ref{Mane Lemma} the geodesic $\gamma_{\theta_{\beta}}(t)$ has no conjugate points in $(-\infty, \beta)$. Therefore, without loss of generality, changing $\theta_{\beta}$ by $\theta$, we can assume that $\theta\notin \B^u$ and $\gamma_{\theta}(t)$ has no conjugate points in $(-\infty, \beta)$. 
 By contradiction assume that $\mathcal{O}^{+}(\theta)\cap \B^u \neq \emptyset$, or equivalent $\theta \in \mathcal{B}_{u}^{+}\setminus \B^u$, consequently  $t_{+}^{u}(\theta)<\infty$.\\
From Lemma \ref{First-Time} $\gamma_{\theta}$ has no conjugate points on $[0,t_{+}^{u}(\theta)]$.\\
\ \\
\textbf{Claim:} $\gamma_{\theta}$  has no conjugate points on $(-\infty,t_{+}^{u}(\theta)]$. 
\begin{proof}[\emph{\textbf{Proof of Claim}}]
If $t_{+}^{u}(\theta)\leq \beta$, we are done. If $t_{+}^{u}(\theta)>\beta$, then by contradiction, assume that $\gamma_{\theta}(t)$ has conjugate points in  $(-\infty,t_{+}^{u}(\theta)]$, then there is a nonzero Jacobi field $J$ and $a, b \in (-\infty,t_{+}^{u}(\theta)]$ such that $J(a)=J(b)=0$, $a<b$. It is easy to see that $a<0$ and $\beta\leq b$. Thus, from Corollary \ref{C - Transfer Prop} and the minimality of $t_{+}^{u}(\theta)$ we have $\phi^{r_0}(\theta)\in \B^{u}$ for some $r_{0}\in (a,0)$. So, by Lemma \ref{Knieper Lemma} the geodesic $\gamma_{\phi^{r_0}(\theta)}(t)=\gamma_{\theta}(t+r_0)$ has  conjugate points in $[-\sigma, 1]$. Therefore, $\gamma_{\theta}(t)$ has conjugate points in $[-\sigma+r_0, 1+r_0]$ which implies, since $r_0<0$ and $\beta>1$, that $\gamma_{\theta}(t)$ has conjugate points in $[-\sigma+r_0, \beta)$ which provides a contradiction and the proof of claim is concluded. \end{proof}
\indent Now we denote by $J^{u}_{\theta}(t)$ the unstable Jacobi field along $\gamma_{\theta}(t)$ such that $J^u_{\theta}(t_{+}^{u}(\theta))=0$. For $r>0$, $J^u_{r}(t):=J^u_{\theta}(t-r)$ is an unstable Jacobi field along 
$\gamma_{\phi^{-r}(\theta)}(t)$ with $t_{+}^{u}(\phi^{-r}(\theta))=t_{+}^{u}(\theta)+r$. We denote $\xi^u_{r}=(J^u_{r}(0), (J^u_{r})'(0))$ and assume that $\|\xi^{u}_r\|=1$. Let $J^s_{r}(t)$ be a stable Jacobi field along $\gamma_{\phi^{-r}(\theta)}(t)$ with $J^s_{r}(0)=J^u_{r}(0)$ and put $\xi^s_{r}=(J^s_{r}(0), (J^s_{r})'(0))$.\\
We define the Jacobi field $J_r(t)=J^{u}_{r}(t)-J^{s}_{r}(t)$ which satisfies $J_r(0)=0$. Take $r>0$ such that $t_{+}^{u}(\theta)+r-1>1$, since $\gamma_{\phi^{-r}(\theta)}(t)$ has no conjugate points in $(-\infty, t_{+}^{u}(\theta)+r)$, from Lemma \ref{Green-Lemma}  $$\|J'_{r}(t)\|\leq A\|J_{r}(t)\| \,\, \, \, \, \, \text{for} \,\,\, 1\leq t < t_{+}^{u}(\theta)+r-1. $$
Thus, 
\begin{eqnarray}\label{eq1-Bounded t_{+1}}
\|(J^{u}_{r})'(t)\|-\|(J^{s}_{r})'(t)\|\leq \|J'_{r}(t)\|\leq A\|J_r(t)\| \leq A\|J^{u}_{r}(t)\|+ A\|J^{s}_{r}(t)\|
\end{eqnarray}
for all $1\leq t < t_{+}^{u}(\theta)+r-1$.\\
Therefore, by the definition of Anosov geodesic flow, we have 
\begin{eqnarray}\label{EQN1-1}
\max\{\|J^{s}_{r}(t)\|\,, \|(J^{s}_{r})'(t)\|\}&\leq & \Big( \|J^{s}_{r}(t)\|^{2}+\|(J^{s}_{r})'(t)\|^{2}\Big)^{\frac{1}{2}} \nonumber \\ &=&\Big\|D\phi_{\theta}^{t}(\xi^s_{r})\Big\|\leq C\lambda^{t}\|\xi^{s}_{r}\|.
\end{eqnarray}
Thus, from  (\ref{eq1-Bounded t_{+1}}) and (\ref{EQN1-1}) we have 
\begin{equation*}\label{eq2-Bounded t_{+2}}
\|(J^{u}_r)'(t)\|  \leq A\|J^{u}_{r}(t)\|+ C(A+1)\lambda^{t}\|\xi^s_{r}\| 
\end{equation*}
for all $1\leq t < t_{+}^{u}(\theta)+r-1$.\\
Therefore,  as $\|\xi^{u}_{r}\|=1$ 
\begin{eqnarray}\label{eq3-Bounded}
\frac{1}{C}\lambda^{-t}&\leq& \|D\phi^{t}(\xi^u_{r})\| \nonumber\\
 &\leq& \Big( \|J^{u}_{r}(t)\|^{2}+\|(J^{u}_{r})'(t)\|^{2} \Big)^{\frac{1}{2}}\nonumber \\ 
&\leq & \Big(\|J^{u}_{r}(t)\|^{2}+ \Big( A\|J^{u}_{r}(t)\|+ C(A+1)\lambda^{t}\|\xi^s\| \Big)^{2}\Big)^{\frac{1}{2}} \nonumber \\
&\leq &\ \sqrt{1+A^2}\|J^{u}_{r}(t)\|+ \sqrt{2CA(A+1)\lambda^{t}\|J^{u}_{r}(t)\|\|\xi^s_{r}\|} + C(A+1)\lambda^{t}\|\xi^s_{r}\| \nonumber \\
&\leq & \sqrt{1+A^2}\|J^{u}_{r}(t)\|+ \sqrt{2CA(A+1)\|J^{u}_{r}(t)\|\|\xi^s_{r}\|} + C(A+1)\lambda^{t}\|\xi^s_{r}\|,
\end{eqnarray}
whenever $1\leq t < t_{+}^{u}(\theta)+r-1$. \\
Note that  $\phi^{-r}(\theta)\notin \B^s$ for all $r>0$, then from Corollary \ref{Unif Bounded t_{+}*} we have $$\|\xi^s_{r}\|\leq \sqrt{1+4k^2}\|J^s_{r}(0)\|\leq \sqrt{1+4k^2}\|\xi^u_{r}\|= \sqrt{1+4k^2}, \,\,\, \text{for}\,\,\ r>\frac{1}{k}.$$
Letting $t\to t^{u}_{+}(\theta)+r-1$, then $J_{r}^{u}(t)\to J_{\theta}^{u}(t^{u}_{+}(\theta)-1)$ and from (\ref{eq3-Bounded}) 
\begin{eqnarray*}
\frac{1}{C}\lambda^{-t^{u}_{+}(\theta)-r+1}&\leq&  \sqrt{1+A^2}\|J^{u}_{\theta}(t^{u}_{+}(\theta)-1)\|\\ &+& \sqrt{2CA(A+1)\|J^{u}_{\theta}(t^{u}_{+}(\theta)-1)\|\sqrt{1+4k^2}} \\ &+& C(A+1)\lambda^{t_{+}^{u}(\theta)+r-1}\sqrt{1+4k^2}, \,\,\, \text{for}\,\,\ r>\frac{1}{k}.
\end{eqnarray*}
Taking $r$ large enough, the two last inequalities provide a contradiction, since $0<\lambda<1$. Therefore, $\mathcal{O}(\theta)\subset SM\setminus \B^u$ as we wished.
\end{proof}

\begin{C}\label{NewLemma1*}
Let $\gamma_{\theta}(t)$ be a geodesic without conjugate points in $(-\infty, \beta)$ or $(\beta, +\infty)$, for some $\beta$, then $\gamma_{\theta}(t)$ has no conjugate points in $(-\infty, +\infty)$. 
\end{C}
\begin{proof}
Assume that $\gamma_{\theta}(t)$ has no conjugate points in $(-\infty, \beta)$, the other case is analogous. From Lemma \ref{Mane Lemma}, $\mathcal{O}^{-}(\theta)\subset SM\setminus \B^u$. So, Theorem \ref{PNPCS} provides $\mathcal{O}(\theta)\subset SM\setminus \B^u$ and consequently $\gamma_{\theta}(t)$ has no conjugate points.
\end{proof}
\noindent The last corollary has the following important consequence. 
\begin{lemma}\label{Nonempty-Good-Set} If $M$ is a non-compact manifold with Anosos geodesic flow, then there is $\theta \in SM$ such that the geodesic $\gamma_{\theta}(t)$ has no conjugate points. Consequently, 
$$E^{s,u}(\phi^{t}(\theta))\cap V(\phi^{t}(\theta))=\{0\}, \,\, \emph{for all} \,\, t\in \re.$$
\end{lemma}
\begin{proof}
 Since  $M$ is a non-compact manifold,  there {exists} a \emph{ray} $\gamma_{\theta}: [0,\infty)\to M$, \emph{i.e.}, $\gamma_{\theta}$ is a geodesic such that $d(\gamma_{\theta}(t), \gamma_{\theta}(s))=|t-s|$, which implies that $\gamma_{\theta}$ does not have conjugate points in $(0, +\infty)$, then from Corollary \ref{NewLemma1*} the geodesic $\gamma_{\theta}$ does not have conjugate points in $(-\infty,+\infty)$.
\end{proof}
This lemma shows that the set of points $\theta$ such that its orbit never intersects $\B^s$ and $\B^u$ is nonempty. It will be used in Section \ref{Main-Results} to prove Theorem \ref{T-MAIN1}.

\subsubsection{Closedness of $\B^{u}_{+}$ and $\B^s_{-}$}
The main goal of this section is to establish that $\mathcal{B}^{u}_{+}$ and $\mathcal{B}^{s}_{-}$ are closed subsets of $SM$ (see Lemma \ref{Bounded t_{+}} and Lemma \ref{Bounded t_{+}-dim 2}). This assertion is pivotal and intricate, requiring the construction of several auxiliary results to substantiate it.

Consider the diffeomorphism  $\mathcal{I}\colon SM \to SM$, given by 
$$\mathcal{I}(x,v)=(x,-v).$$
Note that $\mathcal{I}^{2}=\text{Id}$. This diffeomorphism helps to relate $\B^{u}_{+}$ and $\B^{s}_{-}$, and the non-positive time $t_{+}^{u}(\theta)$ and non-negative time $t_{-}^{s}(\mathcal{I}(\theta))$. 

\begin{lemma}\label{I(u)=s}
The diffeomorphism $\mathcal{I}$ has the following properties
\begin{itemize}
\item[\emph{(i)}] $D\mathcal{I}_{\theta}(E^{u}(\theta))=E^{s}(\mathcal{I}(\theta))$ and $D\mathcal{I}_{\theta}(E^{s}(\theta))=E^{u}(\mathcal{I}(\theta))$.
\item[\emph{(ii)}] $\mathcal{I}(\B^{s,u})=\B^{u,s}$, respectively. Also, $\mathcal{I}(\B^{u}_{+}) =\B^{s}_{-}$ and $\mathcal{I}(\B^{s}_{-}) =\B^{u}_{+}$.
\item[\emph{(iii)}] $t_{+}^{u}(z)=-t_{-}^{s}(\mathcal{I}(z))$ and $t_{-}^{s}(z)=-t_{+}^{u}(\mathcal{I}(z))$.
\end{itemize}
\end{lemma}
\begin{proof}
It is easy to see that in the horizontal and vertical coordinates $D\mathcal{I}_{\theta}(\xi_1,\xi_2)=(\xi_1,-\xi_2)$. So, if $\xi\in E^u(\theta)$ 
 and consider $J_{\xi}(t)$ the unstable Jacobi field associated to $\xi$, then $J_{D\mathcal{I}_{\theta}(\xi)}(t)=J_{\xi}(-t)$ is a stable Jacobi field along $\gamma_{_{\mathcal{I}(\theta)}}(t)$, which implies $D\mathcal{I}_{\theta}(\xi)\in E^s(\mathcal{I}(\theta))$. As $\text{dim}\,E^u(\theta)=\text{dim}\,E^{s}(\mathcal{I}(\theta))$, then $D\mathcal{I}_{\theta}(E^{u}(\theta))=E^{s}(\mathcal{I}(\theta))$.  \\
If $\theta \in \B_{+}^{u}$, then there is $\xi \in E^{u}(\theta)$ and $t_{0}\geq 0$ such that $J_{\xi}(t_0)=0$. Therefore, since $J_{D\mathcal{I}_{\theta}(\xi)}(t)$ is a stable Jacobi field along $\gamma_{_{\mathcal{I}(\theta)}}(t)$ and $J_{D\mathcal{I}_{\theta}(\xi)}(-t_{0})=J_{\xi}(t_{0})=0$, then $\mathcal{I}(\theta)\in \B_{-}^{s}$.
Since $\mathcal{I}^{2}=\text{Id}$, the other cases follow immediately.
\end{proof}

In the following lemma, we establish a specific lower bound function for the norm of the Jacobi field with a zero in an interval without conjugate points. The proof of this lemma follows the same lines as \cite[Lemma 2.13]{Knieper}, with a suitable modification to obtain more accurate estimates needed for the proof of Lemma \ref{Bounded t_{+}} and Lemma \ref{Bounded t_{+}-dim 2}.
\begin{lemma}\label{Green-Lemma}
Let $M$ be a Riemannian manifold with curvature bounded below by $-k^2$, $T >1$, and $\gamma: [-1, T] \to M$ a geodesic arc without conjugate points. Then there exists a positive real function $\rho: (0, T-1) \to M$, depending only on $k$, such that for all  perpendicular Jacobi field $J$ with $J(0) = 0$
$$||J(r)||^2 \geq \rho(T-r) , \, \, {\rm{for}} \, \, {\rm{all}} \, \, 1 \leq r <  T,$$
where
$$ \rho(t)= \displaystyle\frac{||J'(0)||^4}{\Big( \displaystyle\frac{8}{3}||J'(0)||^2 + \displaystyle\frac{16}{15}k^2\Big) \Big(  k \coth k + \displaystyle\frac{1}{t} + \displaystyle\frac{k^2t}{3}        \Big)  }.$$

\end{lemma}
\begin{proof}
	Fix $r \in [1, T)$ and $\delta \in (0, T-r)$. Now consider the piecewise differentiable vector field $X$ along $\gamma(t)$ defined by
\[
X(t)=
\begin{cases}
0,&\mbox{ }\quad -1 \leq t \leq 0,\\
J(t), &\mbox{ }\quad 0 < t \leq r,\\
\Big(1 - \frac{(t-r)}{\delta}\Big)V(t)  &\mbox{ }\quad r < t \leq r + \delta,
\end{cases}
\]

\noindent where $V(t)$ is the parallel vector field along $\gamma$ in $[r, r+\delta)$ such that $V(r) = J(r)$ .
\noindent Then, from the index form 
$$ 0 < I_{[-1, r+ \delta]}(X,X) = \langle J'(r), J(r) \rangle + \dfrac{1}{\delta} \langle J(r), J(r) \rangle - \displaystyle\int_{r}^{r+ \delta} \langle R(t)X(t), X(t) \rangle \, dt.$$
From the proof of Corollary 2.12 in \cite{Knieper}, we have $\langle J'(r), J(r)\rangle \leq k \coth k ||J(r)||^2$. Hence, 
\begin{eqnarray}\label{eq1NewKnieper}
	 I_{[-1, r+ \delta]}(X,X) & \leq & ||J(r)||^2\Big(k \coth k + \dfrac{1}{\delta}\Big) + k^2\displaystyle\int_{r}^{r+ \delta} \langle X(t), X(t) \rangle \, dt \nonumber\\
	 & = & ||J(r)||^2 \Big(k \coth k + \dfrac{1}{\delta} + k^2\displaystyle\int_{r}^{r+ \delta}  	\Big(1 - \frac{(t-r)}{\delta}\Big)^2 \, dt \Big) \nonumber\\
	 & =& ||J(r)||^2 \Big(k \coth k + \dfrac{1}{\delta} + \dfrac{k^2 \delta}{3} \Big).
\end{eqnarray}

\noindent Let $Y$ be the piecewise differentiable vector field along $\gamma(t)$ defined by

\[
Y(t)=
\begin{cases}
(1-t^2)Z(t),&\mbox{ }\quad -1 \leq t \leq 1,\\
0, &\mbox{ }\quad 0 < t \leq r + \delta.
\end{cases}
\]

\noindent 	where $Z$ is the parallel vector field along $\gamma$ in $[-1,1]$ such that $Z(0) = J'(0)$. 
\noindent  Then
$$    I_{[-1, r+ \delta]}(X,Y) =  - ||J'(0)||^2.                                                               $$
Furthermore, it is not difficult to prove that 
\begin{equation}\label{eq2NewKnieper}
0 < I_{[-1, r+ \delta]}(Y,Y) \leq \displaystyle\frac{8}{3}||J'(0)||^2 + \displaystyle\frac{16}{15}k^2.
\end{equation}

\noindent Now consider for $\lambda \in \mathbb{R}$ the quadratic equation
$$       I_{[-1, r+ \delta]}(X + \lambda Y, X + \lambda Y)  =     I_{[-1, r+ \delta]}(X, X)            + 2\lambda    I_{[-1, r+ \delta]}(Y,X)  + \lambda^2    I_{[-1, r+ \delta]}(Y,Y).$$
This quadratic equation has no real roots since the Jacobi equation has no conjugate points,  therefore 
$$         I_{[-1, r+ \delta]}(X,X)   I_{[-1, r+ \delta]}(Y,Y)      >    I_{[-1, r+ \delta]}(Y,X) =  ||J'(0)||^4.                                              $$
From (\ref{eq1NewKnieper}) and (\ref{eq2NewKnieper}) we have 
$$ ||J(r)||^2 \geq  \displaystyle\frac{||J'(0)||^4}{\Big(\displaystyle\frac{8}{3}||J'(0)||^2 + \displaystyle\frac{16}{15}k^2\Big) \Big(  k \coth k + \displaystyle\frac{1}{\delta} + \displaystyle\frac{k^2 \delta}{3}\Big) }.$$
Hence, taking $\delta \to T-r$ we conclude the proof of lemma.
\end{proof}


The lemma below demonstrates that $\mathcal{B}^{u}_{+}$ and $\mathcal{B}^{s}_{-}$ are closed sets in higher dimensions under the assumption of bounded sectional curvature. Interestingly, this result holds in dimension two under the sole assumption of sectional curvature being bounded below (see Lemma \ref{Bounded t_{+}-dim 2}).
\begin{lemma}\label{Bounded t_{+}}
If the sectional curvature of $M$ is bounded, then the sets $\mathcal{B}^{u}_{+}$ and $\mathcal{B}^{s}_{-}$ are closed.
\end{lemma}

\begin{proof}[\emph{\textbf{Proof of Lemma \ref{Bounded t_{+}}}}]
As $\mathcal{I}$ is a diffeomorphism, from Lemma \ref{I(u)=s} it is sufficient to prove that $\B^{u}_{+}$ is closed.\\ 
Assume that $\theta_n \to \theta$ with $\theta_n\in \mathcal{B}^{u}_{+}$, then for each $n$ there is $t_n\geq 0$ such that $\phi^{t_n}(\theta_n)\in \B^{u}$ and there is $\eta_{n}\in E^u(\theta_n)$, $||d\phi^{1}(\eta_n)||=1$ such that $J_{\eta_n}(t_n)=0$. 
Moreover, from Lemma \ref{stable-norm} and Lemma \ref{Discrete Intersection 1} we can assume that $\theta \notin \B^u\cup \B^s$ and for $n$ large enough $\theta_n\in \mathcal{U}^u_{\theta}\cap \mathcal{U}^s_{\theta} \subset SM\setminus (\B^u\cup\B^s)$.\\
\indent It is clear that if $\phi^1(\theta)\in \B^u$ then $\theta\in \B_{+}^{u}$.  If $\phi^1(\theta)\in \B^s$, then by the \emph{Transfer Property} (Lemma \ref{Transfer Property}) there is $t_1\in [0,1+\sigma]$ such that $\phi^{t_1}(\theta)\in \B^u$ and $\theta\in \B_{+}^{u}$. Therefore, we can assume that $\phi^{1}(\theta)\notin (\B^s\cup \B^u)$, and consequently $\phi^1(\theta_n)\in \mathcal{U}^u_{\phi^1(\theta)}\cap \mathcal{U}^s_{\phi^1(\theta)} \subset SM\setminus (\B^u\cup\B^s)$. From Lemma \ref{First-Time} provides that $\gamma_{\theta_n}(t)$ has no conjugate points on $[0,t_n]$. Consider for each $n$, $\theta_{n}^1:=\phi^1(\theta_{n})$, so $\gamma_{\theta_n^1}(t)$ has no conjugate points on $[-1,t_n-1]$. \\

\noindent \textbf{Main Claim:} The sequence $\{t_n\}$ is bounded.\\ 
\ \\
\noindent Note that this claim implies that, from Lemma \ref{Le-Closed} and passing a sub-sequence if necessary, $\phi^{t_{n}}(\theta_{n}) \to \phi^{t_0}(\theta)\in \B^u$, for some $t_0\geq 0$. Thus we have that $\theta \in \B_{+}^{u}$ as we wish.\\
\ \\
\indent Therefore, to complete the proof of the Lemma, we need only prove the Main Claim. 

\begin{proof}[\emph{\textbf{Proof of Main Claim}}]
By contradiction, suppose that $\{t_n\}$ is unbounded. Since \\
$\eta_n=d\phi^{-1}_{\phi^{1}(\theta)}(d\phi^{1}_{\theta}(\eta_n))$, then $|\eta_n|\leq C\lambda ||d\phi^{1}_{\theta}(\eta_n)||=C\lambda$, then without loss of generality, passing to a sub-sequence if necessary,  we assume that $\eta_n\to \eta \in E^u(\theta)$, and $\theta_n^1\to \theta^1:=\phi^{1}(\theta)$.
Let $\xi_n\in E^s(\theta_n)$ such that $J_{\xi_{n}}(1)=J_{\eta_n}(1)$. Note that $E^s(\theta_n^1)\cap E^u(\theta_n^1)=\{0\}$, then $J'_{\xi_{n}}(1)\neq J'_{\eta_n}(1)$. Thus, we define 
 \begin{equation*}\label{eq(-1)New}
 J_{n}(t)=J_{\eta_n}(t+1)-J_{\xi_n}(t+1)
 \end{equation*} 
a non-zero Jacobi field along $\gamma_{\theta_n^1}(t)$ with $J_n(0)=0$.
Put  $||J'_{\eta_{n}}(1)- J'_{\xi_n}(1)||:=\kappa_n$.  
From (\ref{eq4}) and continuity of $E^u$,  $d\phi^1_{\theta_{n}}(\eta_n)=(J_{\eta_{n}}(1), J'_{\eta_n}(1))\to d\phi_{\theta}^1(\eta)=(J_{\eta}(1), J'_{\eta}(1))\in E^{u}(\theta^{1})$. Thus $J_{\xi_n}(1)=J_{\eta_n}(1)\to J_{\eta}(1)$. Also, from Lemma \ref{stable-norm}, as $\phi^1(\theta_n)\in \mathcal{U}^u_{\phi^1(\theta)}\cap \mathcal{U}^s_{\phi^1(\theta)}$ we have 
$$||J'_{\xi_n}(1)||\leq \alpha_{s}(\theta^1)||J_{\xi_n}(1)|| \,\,\, \text{and}\,\,\, ||J'_{\eta_n}(1)||\leq \alpha_{u}(\theta^1)||J_{\eta_n}(1)||.$$
So, passing a subsequence if necessary we can assume that \newline $\xi_n=d\phi^{-1}_{\phi^1(\theta_{n})}(J_{\xi_n}(1),J'_{\xi_n}(1))\to \xi\in E^s(\theta)$. \\
Remember the \emph{Wroskian}, $W(J_{\xi_n},J_{\eta_n})(t)$ of $J_{\xi_n}$ and $J_{\eta_n} $ defined by
$$W(J_{\xi_n},J_{\eta_n})(t):=\langle J_{\xi_n}(t),J'_{\eta_n}(t)\rangle - \langle J'_{\xi_n}(t),J_{\eta_n}(t)\rangle.$$
Since $J_{\xi_n}$ and $J_{\eta_n}$ are linearly independent, it is evident that $W(J_{\xi_n},J_{\eta_n})(t)$ is a nonzero constant function. Therefore, given $J_{\eta_n}(t_n)=0$, we have:
\begin{eqnarray*}\label{EQ0''N}
0\neq W_n&:=& W(J_{\xi_n},J_{\eta_n})(t)=\langle J_{\xi_n}(t_n),J'_{\eta_n}(t_n)\rangle \nonumber\\
&\,=&\langle J_{\xi_n}(1),J'_{\eta_n}(1)\rangle - \langle J'_{\xi_n}(1),J_{\eta_n}(1)\rangle \nonumber \\
&\,=&\langle J_{\xi_n}(1),J'_{\eta_n}(1) -  J'_{\xi_n}(1)\rangle.
\end{eqnarray*} 
As $\xi_n\to \xi$ and $\eta_n \to \eta$, then $W_n$ converges to $\langle J_{\xi}(1),J'_{\eta}(1) - J'_{\xi}(1)\rangle:= W_0=W(J_{\xi}, J_{\eta})\neq 0$. Thus, for sufficiently large $n$, we have:

\begin{equation}\label{EQ0'NN}
\frac32 |W_0|\geq |W_n|\geq \frac12|W_0|.
\end{equation}
In the remainder of the proof, without loss of generality, we assume $W_0>0$ (this case always occurs in dimension two), since the case of $W_0<0$ is analogous.

For each $n$, we will now introduce four sequences of parameters $\epsilon_n^2$, $\beta_n$, $\sigma_n$, and $\mu_n$. These sequences enable us to construct a suitable Taylor polynomial approximation of $||J_n(t)||^2$ in an appropriate neighborhood of $t_n$, which depends on these parameters. This will allow us to derive a contradiction with the assistance of Lemma \ref{Green-Lemma}. \\
\textbf{Parameters $\epsilon_n^2$ and $\beta_n$:}\\ 
\noindent For each $n$, we consider the function $f_{n}(t):=||J_{n}(t)||^2$.
Then $f_n(t_n-1)=||J_{\xi_{n}}(t_n)||^2:=\epsilon_n^2$ and 
\begin{eqnarray}\label{E1N}
f'_{n}(t_n-1)&=&2\langle J'_{\eta_n}(t_n)-J'_{\xi_n}(t_n), -J_{\xi_{n}}(t_n)\rangle\nonumber \\
&=& -2\langle J'_{\eta_n}(t_n), J_{\xi_{n}}(t_n)\rangle + 2\langle J'_{\xi_n}(t_n), J_{\xi_{n}}(t_n)\rangle \nonumber \\
&=& -2W_n+ 2\langle J'_{\xi_n}(t_n), J_{\xi_{n}}(t_n)\rangle.
\end{eqnarray}
From (\ref{E1N}), since $J_{\xi_n}(t)$ is a stable Jacobi field, then for $n$ large enough $f_n'(t_n-1)<0$.
In this case, consider the parameter
$$\beta_n = \inf_{\alpha>0}\{\alpha\in (t_n, \infty): f_n'(t_n-1+\alpha)=0\}.$$
Since $f_n(t)$ is a a non-negative function, $f_n(t_n-1)$ is small, and $J_{\eta_n}(t)$ is an unstable Jacobi field, then $\beta_n$ is well defined. Moreover, from definition of $\beta_n$ 
\begin{equation}\label{E3N}
f'_n(t_n-1+\alpha)<0 \,\,\,\text{for}\,\,\, t\in [0, \beta_n)\,\,\ \text{and}\,\,\, f'_n(t_n-1+\beta_n)=0,
\end{equation}
consequently, for $w\in (t_n-1,t_n-1+\beta_n)$ we have
\begin{equation}\label{E4N}
0\leq f_n(w)<f_{n}(t_n-1)=||J_{\xi_n}(t_n)||^2:=\epsilon_n^2.
\end{equation}
\textbf{Parameters $\sigma_n$ and $\mu_n$:}\\
Consider the function $g_n(t)=\langle J_{\xi_n}(t), J_{\eta_n}(t)\rangle$, which satisfies $g_n(1)>0$, $g_n(t_n)=0$ and $g_n'(t_n)=W_n>0$, for $n$ large enough. 
So, the following parameter is well-defined  

$$\sigma_n=\sup_{\alpha\in (0,t_n]}\{g'_{n}(t)\leq 0;\,\, \, t\in [t_n-\alpha, t_n]\}.$$
Also, if  $h_{n}(t): =||J_{\eta_n}(t)||^2$, then since $J_{\eta_n}(t)$ is aN unstable Jacobi field and $h_{n}(t_n)=0$ the following parameter is well-defined 
$$\mu_n:=\sup_{\alpha\in (0,t_n]}\{h'_{n}(t)\leq 0;\,\, \, t\in [t_n-\alpha, t_n]\}.$$

We will now examine some properties of the parameters $\epsilon_n^2$, $\beta_n$, $\sigma_n$, and $\mu_n$. Specifically, we aim to establish the following main properties:
\begin{itemize}
\item[(a)] The sequence $\dfrac{\beta_n}{\epsilon_n^2}$ is both upper and lower bounded.
\item[(b)] There exist a constant $a_0$ (chosen very suitably) such that $\min\{\sigma_n, \mu_n\}\geq a_0 \epsilon_n^2$.
\end{itemize}
Due to the intricate nature and considerable length of the proofs, we will divide the process of establishing properties (a) and (b) into 10 sub-claims.\\
\ \\
\noindent \textbf{Claim 1:} For all $\alpha \in [-\sigma_n,\beta_n]$  
$$\langle J_{\xi_n}(t_n+\alpha), J'_{\eta_n}(t_n+\alpha)\rangle \geq \frac{W_n}{2}\geq \frac{W_0}{4}.$$

\begin{proof}[\emph{\textbf{Proof of Claim 1}}]
Note that, 
\begin{eqnarray}\label{EQ1NN}
g_{n}'(t)&=&\langle J_{\xi_n}(t), J'_{\eta_n}(t)\rangle+ \langle J'_{\xi_n}(t), J_{\eta_n}(t)\rangle \nonumber\\
&=& 2\langle J_{\xi_n}(t), J'_{\eta_n}(t)\rangle - W_n \\
&=& W_n + 2 \langle J'_{\xi_n}(t), J_{\eta_n}(t)\rangle. \label{EQ2NN}
\end{eqnarray}
From definition of $\sigma_n$, if $\alpha\in[-\sigma_n,0]$, then $g_n'(t_{n}+\alpha)\geq 0$. Moreover, if $\alpha\in [0,\beta_n]$, then from definition of $J_n(t)$ and (\ref{E4N}) we have 
\begin{eqnarray}\label{EQ3'NN}
||J_{\eta_n}(t_n+\alpha)||&\leq & ||J_{n}(t_n+\alpha)||+||J_{\xi_n}(t_n+\alpha)|| \nonumber \\
&\leq & \epsilon_n^2 + C\lambda^{t_n+\alpha}||\xi_n||.
\end{eqnarray}
Therefore,
\begin{equation}\label{EQ3NN}
|\langle J'_{\xi_n}(t_n+\alpha), J_{\eta_n}(t_n+\alpha)\rangle|\leq C\lambda^{t_n+\alpha}||\xi_n||(\epsilon_n^2 + C\lambda^{t_n+\alpha}||\xi_n||).
\end{equation}
Note that, for $n$ large enough the right side of (\ref{EQ3NN}) converges to $0$, since $\xi_n \to \xi$. Thus, for $n$ large enough, we have that 
\begin{equation}\label{EQ4NN}
|\langle J'_{\xi_n}(t_n+\alpha), J_{\eta_n}(t_n+\alpha)\rangle|\leq \frac{W_0}{4}, \,\, \,\,\ \alpha\in [0,\beta_n].
\end{equation}
Thus, from (\ref{EQ2NN}) and (\ref{EQ4NN}) we can conclude that 
$$g_n'(t_n+\alpha)>0,\,\,\, \alpha\in [0,\beta_n].$$
Consequently, $g_n'(t_n+\alpha)\geq 0$ for all $\alpha\in [-\sigma_n, \beta_n]$, which together with (\ref{EQ0'NN}) and (\ref{EQ1NN}) provides the proof of Claim 1.
\end{proof}

\noindent \textbf{Claim 2:}
There is a constant $\delta_0\in (0,1]$ such that for all $n$ large enough 
\begin{equation}\label{EQ5NN}
\frac{|\langle J_{\xi_n}(t_n+\alpha),J'_{\eta_n}(t_n+\alpha)\rangle|}{||J_{\xi_n}(t_n+\alpha)||\cdot||J'_{\eta_n}(t_n+\alpha)||} \geq \delta_0;\, \,\, \alpha\in [-\sigma_n,\beta_n].
\end{equation}
\begin{proof}[\emph{\textbf{Proof of Claim 2}}]
For each $\alpha\in [-\sigma_n,\beta_n]$ consider the set 
$$\mathcal{A}_{n,\alpha}:=\Big \{\nu: |\langle\nu, J'_{\eta_n}(t_n+\alpha)\rangle|\geq \frac{W_0}{4}\Big \}.$$
It is easy to see that there is a positive constant $\alpha_{_0}$ such that for every $\nu\in \mathcal{A}_{n,\alpha}$ and all $\alpha\in [-\sigma_n,\beta_n]$ we have 
\begin{equation}\label{EQ5"NN}
|\angle(\nu, J'_{\eta_n}(t_n+\alpha))\pm\frac{\pi}{2}|>\alpha_{_0}>0,
\end{equation}
where $\angle(v,w)$ denotes the angle between $v$ and $w$ (angle between $[-\frac{\pi}{2},\frac{\pi}{2}]$).
Finally, note that 

$$\frac{|\langle J_{\xi_n}(t_n+\alpha),J'_{\eta_n}(t_n+\alpha)\rangle|}{||J_{\xi_n}(t_n+\alpha)||\cdot||J'_{\eta_n}(t_n+\alpha)||}=\cos \angle(J_{\xi_n}(t_n+\alpha), J'_{\eta_n}(t_n+\alpha).$$

\noindent From  Claim 1 $J_{\xi_n}(t_n+\alpha)\in \mathcal{A}_{n,\alpha}$, therefore (\ref{EQ5"NN}) gives us 
$$\cos \angle(J_{\xi_n}(t_n+\alpha), J'_{\eta_n}(t_n+\alpha)\geq \cos \alpha_0: =\delta_0>0,$$
which concludes the proof of Claim 2. 
\end{proof}
\ \\
As an important observation, note that (\ref{EQ5NN}) is invariant for multiplication of a parameter $a>0$, \emph{i.e.},  if we consider the stable and unstable Jacobi field $a J_{\xi_n}(t)$ and $a J_{\eta}(t)$, respectively, then (\ref{EQ5NN}) is valid for the same $\delta_0$.
Therefore, for a parameter $\tau>0$, we consider 
 $$J_{n,\tau}(t)=J_{\tau \eta_n}(t+1)-J_{\delta \xi_n}(t+1)=\tau\, J_n(t).$$
We will choose a suitable parameter $\tau$. In fact: 
note that $$\kappa_{n,\tau}:=||J'_{\tau\eta_{n}}(1)- J'_{\tau\xi_n}(1)||=\tau \kappa_n; \,\,\, \kappa_{0,\tau}:=||J'_{\tau \eta}(1)- J'_{\tau\xi}(1)||=\tau \kappa_0,$$
Inspired by Lemma \ref{Green-Lemma}, note that 
$$\lim_{\tau\to \infty}\dfrac{\tau^4\kappa_0^4}{\frac{8}{3}\tau^2 \kappa_0^2+\frac{16}{15}k^2}=+\infty.$$
As $\delta_0$ does not depend on $\tau$, then there is $\tau_0>0$ such that
\begin{equation}\label{EQ15*NNN}
\dfrac{\tau_0^4\kappa_0^4}{\Big(\dfrac{8}{3}\tau_0^2\kappa_0^2+\dfrac{16}{15}k^2 \Big)}>\frac{2}{\delta_0^2}+ 5.
\end{equation}

\noindent Using the parameter $\tau_0$ and (\ref{EQ15*NNN}), if necessary, we change $J_{n}$ by $\tau_{_0}J_{n}$ such that, from now on (avoid $\delta_0$) we have 
\begin{equation}\label{EQ15*NN*}
\dfrac{\kappa_0^4}{\Big(\dfrac{8}{3}\kappa_0^2+\dfrac{16}{15}k^2 \Big)}>\frac{2}{\delta_0^2}+ 5.
\end{equation}
The choice of this parameter $\tau_0$ will be used at the end of the proof to obtain a contradiction. \\
\ \\
\noindent \textbf{Claim 3:}
For all $\alpha\in[-\sigma_n,\beta_n]$ 
\begin{equation}\label{EQ6NN}
\frac{(|W_n|-|P_n(\alpha)|)^2}{||J_{\xi_n}(t_n+\alpha)||^2}\leq ||J'_{\eta_n}(t_n+\alpha)||^2\leq \frac{(|W_n|+|P_n(\alpha)|)^2}{\delta_0^2||J_{\xi_n}(t_n+\alpha)||^2},
\end{equation}
where $P_n(\alpha):=\langle J'_{\xi_n}(t_n+\alpha),J_{\eta_n}(t_n+\alpha)\rangle$ and $\delta_0$ as Claim 2.\\
\begin{proof}[\emph{\textbf{Proof of Claim 3}}]
Using the Wroskian 
\begin{equation}\label{EQ7NN}
\langle J_{\xi_n}(t_n+\alpha),J'_{\eta_n}(t_n+\alpha)\rangle = \langle J'_{\xi_n}(t_n+\alpha),J_{\eta_n}(t_n+\alpha)\rangle +W_n.
\end{equation}
Then (\ref{EQ7NN}) provides
\begin{eqnarray*}
||J_{\xi_n}(t_n+\alpha)||\cdot||J'_{\eta_n}(t_n+\alpha)||&\geq& |\langle J_{\xi_n}(t_n+\alpha),J'_{\eta_n}(t_n+\alpha)\rangle|\\
&\geq & |W_n|-|P_n(\alpha)|,
\end{eqnarray*}
which give us the left side of (\ref{EQ6NN}).\\
The proof of the right side of (\ref{EQ6NN}) follows from the Claim 2 and (\ref{EQ7NN}), since 
\begin{eqnarray*}
||J_{\xi_n}(t_n+\alpha)||\cdot||J'_{\eta_n}(t_n+\alpha)||&\leq& \frac{|\langle J_{\xi_n}(t_n+\alpha),J'_{\eta_n}(t_n+\alpha)\rangle|}{\delta_0}\\
&\leq & \frac{|W_n|+|P_n(\alpha)|}{\delta_0}.
\end{eqnarray*}
\end{proof}
\noindent In the next claim, we estimate $\dfrac{\beta_n}{\epsilon_n^2}$. \\
\textbf{Claim 4:} The sequence $\Big\{\dfrac{\beta_n}{\epsilon_n^2}\Big\}$ is bounded.
\begin{proof}[\emph{\textbf{Proof of Claim 4}}]
Since $f'_n(t_n-1+\beta_n)=0$, using the Taylor's Theorem, for some 
$\tilde{\beta}_{n}\in (t_n-1, t_n-1+\beta_n)$
\begin{eqnarray}\label{EQ9'NN}
f_n(t_n-1)&=& f_n(t_n-1+\beta_n)-f'_n(t_n-1+\beta_n)\beta_n+R(\beta_n) \nonumber\\
\epsilon_n^2&=& f_n(t_n-1+\beta_n)+f''_n(\tilde{\beta}_n)\beta_n^2  \nonumber \\
\epsilon_n^2 &=& f_n(t_n-1+\beta_n)+ 2\Big(||J_{n}'(\tilde{\beta}_n)||^2-K(\tilde{\beta}_n)||J_{n}(\tilde{\beta}_n)||^2\Big )\beta_n^2 ,
\end{eqnarray}
where 
$K(\tilde{\beta}_n)=K(\gamma_{\theta_{n}^1}'(\tilde{\beta}_n), J_{n}(\tilde{\beta}_n))$ is the sectional curvature.
From the last equality 
\begin{equation}\label{EQ9NN}
1=\frac{f_n(t_n-1+\beta_n)}{\epsilon_n^2}+ 2\frac{||J_{n}'(\tilde{\beta}_n)||^2\beta_n^2}{\epsilon_n^2}-2K(\tilde{\beta}_n)||J_{n}(\tilde{\beta_n})||^2\frac{\beta_n^2}{\epsilon_n^2}.
\end{equation}
By hypothesis, we have that $-k^2\leq K(\tilde{\beta}_n)\leq b^2$ for some $k,b>0$. Then. from (\ref{E4N})  
\begin{equation}\label{EQ10'NN}
-k^2 \epsilon_n^2\leq K(\tilde{\beta}_n)||J_{n}(\tilde{\beta_n})||^2\leq b^2\epsilon_n^2.
\end{equation}
From definition of $J_n(t)$, $||J_{n}'(\tilde{\beta}_{n})||$ converges to infinite, so from (\ref{EQ9'NN}) and (\ref{EQ10'NN}) we can conclude that 
$$\lim_{n\to \infty}\beta_n =0.$$
Also, from (\ref{EQ10'NN}) 
\begin{equation}\label{EQ10''NN}
-k^2 \beta_n^2\leq K(\tilde{\beta}_n)||J_{n}(\tilde{\beta_n})||\frac{\beta_n^2}{\epsilon_n^2}\leq b^2\beta_n^2.
\end{equation}
By contradiction, assume that $\Big\{\dfrac{\beta_n}{\epsilon_n^2}\Big\}$ is an unbounded sequence, then since $\beta_n$ converge to $0$, then (\ref{EQ9NN}) and (\ref{EQ10''NN}) provides
\begin{equation}\label{EQ11"NN}
\lim_{n\to \infty}||J_{n}'(\tilde{\beta}_n)||^2\beta_n=0.
\end{equation}


\noindent Put $\bar{\beta}_{n}\in [0,\beta_n]$ such that $\tilde{\beta}_{n}=t_n-1+\bar{\beta}_n$. Since $J_{\xi_n}(t)$  is a stable Jacobi field, then the definition of $J_n(t)$ and (\ref{EQ11"NN}) give us 
\begin{equation}\label{EQ11'NN}
\lim_{n \to \infty} ||J_{\eta_n}'(t_n+ \bar{\beta}_n)||^2 \beta_n = 0.
\end{equation} From (\ref{EQ6NN}) 
\begin{equation}\label{EQ8NN'}
\frac{(|W_n|-|P_n(\bar{\beta}_{n})|)^2}{||J_{\xi_n}(t_n+\bar{\beta}_{n})||^2}\leq ||J'_{\eta_n}(t_n+\bar{\beta}_{n})||^2,
\end{equation}
where $P_n(\bar{\beta}_n):=\langle J'_{\xi_n}(t_n+\bar{\beta}_n),J_{\eta_n}(t_n+\bar{\beta}_n)\rangle$.\\
From mean value theorem, for some  $w\in [t_n, t_n+\bar{\beta}_n]$

\begin{equation}\label{EQ1"NN}
\frac{||J_{\xi_n}(t_n+\bar{\beta}_n)||^2}{\beta_n}=2\langle J'_{\xi_n}(w),J_{\xi_n}(w) \rangle\frac{\bar{\beta}_n}{\beta_n} + \frac{\epsilon_n^2}{\beta_n}.
\end{equation}
As $|\langle J'_{\xi_n}(w),J_{\xi_n}(w) \rangle|\leq C^2\lambda^{2w}||\xi_n||\leq  C^2\lambda^{2t_n}||\xi_n||$, $\dfrac{\bar{\beta}_n}{\beta_n}\leq 1$, and $\dfrac{\epsilon_n^2}{\beta_n} \to 0$ (because, we are assuming assume that $\Big\{\dfrac{\beta_n}{\epsilon_n^2}\Big\}$ is an unbounded sequence), then 
from (\ref{EQ1"NN}) we have 

$$\ds\lim_{n \to \infty}\frac{\beta_n}{||J_{\xi_n}(t_n+\bar{\beta}_n)||^2}=\infty.$$
Finally, as $W_n$ converges to $W_0>0$ and $P_n(\bar{\beta}_n)$ converges to $0$, then from the last inequality and (\ref{EQ8NN'}) we conclude that $\ds\lim_{n\to \infty}||J_{\eta_n}'(t_n+\bar{\beta}_n)||^2\beta_n=\infty$, which is a contradiction with (\ref{EQ11'NN}) and consequently the sequence $\Big\{\dfrac{\beta_n}{\epsilon_n^2}\Big\}$ is a bounded sequence.
\end{proof}

The claim below holds for any positive constant $a_0 > 0$. However, we select a specific value for $a_0$ to derive a contradiction. Specifically, we set $a_{0}:=\dfrac{1-\delta_0^2}{W_0}+2$, where $\delta_0$ is as defined in claim 3.
\\
\noindent \textbf{Claim 5:} For all $a_0>0$, we have 
$$\ds\lim_{n\to \infty}\Big |\dfrac{||J_{\xi_n}(t_n)||^2}{||J_{\xi_n}(t_n+\alpha)||^2}-1\Big | = 0, \,\, \text{for all}\, \, \alpha\in[-a_0\epsilon_n^2,\beta_n].$$
\begin{proof}[\emph{\textbf{Proof of Claim 5}}] Consider $\alpha\in [-a_0\epsilon_n^2,\beta_n]$, then from mean value theorem (similar to (\ref{EQ1"NN}))
\begin{equation}\label{EQ1'NN}
\frac{||J_{\xi_n}(t_n+\alpha)||^2}{||J_{\xi_n}(t_n)||^2}-1=2\frac{\langle J'_{\xi_n}(w),J_{\xi_n}(w) \rangle}{||J_{\xi_n}(t_n)||^2} \alpha, \,
\end{equation}
for some  $w\in [t_n, t_n+\alpha]$ or $w\in [t_n+\alpha, t_n]$.\\
As $\epsilon_n=||J_{\xi_n}(t_n)||$, then  
\begin{eqnarray}\label{EQ2'NN}
\dfrac{|\langle J'_{\xi_n}(w),J_{\xi_n}(w)\rangle\alpha|}{\epsilon_n^2}&\leq & ||J'_{\xi_n}(w)||\cdot||J_{\xi_n}(w)||\Big \{a_0, \frac{\beta_n}{\epsilon_n^2} \Big \} \nonumber\\
&\leq &  C^2\lambda^{2w}||\xi_n||\max \Big \{a_0, \frac{\beta_n}{\epsilon_n^2} \Big \}.
\end{eqnarray}
From Claim 4 the sequence $\{\frac{\beta_n}{\epsilon_n^2}\}$ is bounded, then
the right of (\ref{EQ2'NN}) converges to $0$, which from (\ref{EQ1'NN}) completes the proof of claim.
\end{proof}


\noindent With the assistance of the last claims, we establish a precise estimate of $\frac{\beta_n}{\epsilon_n^2}$.\\

\noindent \textbf{Claim 6:} For all $\epsilon>0$ there is $n_0$ such that $$\frac{\delta_0^2}{W_0}-\epsilon < \dfrac{\beta_n}{\epsilon_n^2}< \frac{1}{W_0}+\epsilon, \,\,\,\, \text{for all} \,\,\, n\geq n_0,$$

\begin{proof}[\emph{\textbf{Proof of Claim 6}}]
From (\ref{E3N}),  $f'_{n}(t_n-1+\beta_n)=0$, then
\begin{eqnarray*}
|f'_{n}(t_n-1+\beta_n)-f'_{n}(t_n-1)|&=&|f''_n(\tilde{\beta}_n)|\beta_n \nonumber\\|f'_{n}(t_n-1)|&=&  2\Big|||J_{n}'(\tilde{\beta}_n)||^2-K(\tilde{\beta}_n)||J_{n}(\tilde{\beta}_n)||^2\Big|\cdot \beta_n
\end{eqnarray*}
for some $\tilde{\beta}_n\in (t_n-1,t_n-1+\beta_n)$, where  
 $K(\tilde{\beta}_n)=K(\gamma_{\theta_{n}^{1}}'(\tilde{\beta}_n), J_{n}(\tilde{\beta_n}))$ is the sectional curvature. \\
The last equation and  (\ref{E1N}) provides 
\begin{equation}\label{EQ10NN}
 2|\langle J'_{\xi_n}(t_n), J_{\xi_{n}}(t_n)\rangle-W_n|=2\Big|||J_{n}'(\tilde{\beta}_n)||^2-K(\tilde{\beta}_n)||J_{n}(\tilde{\beta}_n)||^2\Big|\cdot \beta_n.
\end{equation}

\noindent Note that the left side of (\ref{EQ10NN}) converges to $2W_0$. Moreover, 
since \newline $-k^2\leq K(\tilde{\beta}_n)\leq b^2$, then from (\ref{E4N})  
$$-k^2 \epsilon_n^2\leq K(\tilde{\beta}_n)||J_{n}(\tilde{\beta}_n)||\leq b^2\epsilon_n^2.$$
Consequently, from (\ref{EQ10NN}) 
\begin{equation}\label{EQ11NN}
\lim_{n \to \infty} ||J_{n}'(\tilde{\beta}_n)||^2\cdot \beta_n = W_0.
\end{equation}
Put $\tilde{\beta}_n:=t_n-1+ \bar{\beta}_n$, for some $\bar{\beta}_n\in [0,\beta_n]$. Since $J_{\xi_n}(t)$  is a stable Jacobi field, then the definition of $J_n(t)$ and (\ref{EQ11NN}) give us 
\begin{equation}\label{EQ11'NN'}
\lim_{n \to \infty} ||J_{\eta_n}'(t_n+ \bar{\beta}_n)||^2\cdot \beta_n = W_0.
\end{equation}

\noindent Finally, from Claim 3 or (\ref{EQ6NN}) we have 
\begin{equation}\label{EQ12NN}
\frac{(|W_n|-|P_n(\bar{\beta}_n)|)^2\epsilon_n^2}{||J_{\xi_n}(t_n+\bar{\beta}_n)||^2}\leq ||J'_{\eta_n}(t_n+\bar{\beta}_n)||^2\epsilon_n^2\leq \frac{(|W_n|+|P_n(\bar{\beta}_n)|)^2\epsilon_n^2}{\delta_0^2||J_{\xi_n}(t_n+\bar{\beta}_n)||^2}.
\end{equation}
Similar to (\ref{EQ3NN}),
$\ds\lim_{n\to \infty}|P_n(\bar{\beta}_n)|=0$. Thus, claim 5 provides that the left side and right side of (\ref{EQ12NN}) converge to $W_0^2$ and $\frac{W_0^2}{\delta_0^2}$, respectively. 
Therefore, from (\ref{EQ11'NN'}) and (\ref{EQ12NN}), given $\epsilon>0$ we can find $n_0$ large enough such that such that 
$$\frac{\delta_0^2}{W_0}-\epsilon < \dfrac{\beta_n}{\epsilon_n^2}< \frac{1}{W_0}+\epsilon, \,\,\,\, \text{for all} \,\,\, n\geq n_0,$$
and the proof of claim is complete.
\end{proof}
\noindent Consider the function $h_{n}(t): =||J_{\eta_n}(t)||^2$, and define 
$$\mu_n:=\sup_{\alpha\in (0,t_n]}\{h'_{n}(t)\leq 0;\,\, \, t\in [t_n-\alpha, t_n]\}.$$
The number $\mu_n$ is well defined since $J_{\eta_n}(t)$ is a unstable Jacobi field and $h_{n}(t_n)=0$.\ \\
\\
\noindent \textbf{Claim 7:} Consider  $\Gamma_n(\alpha):=||J_{\eta_n}(t_n+\alpha)||^2$, then there is a constant $\Gamma_0$ such that, for $n$ large enough
$$\Gamma_n(\alpha)\leq \Gamma_0, \,\,\ \text{for all} \,\, \alpha \in [-\chi_n, \beta_n],$$
where $\chi_n:=\min\{\sigma_n, \mu_n, a_0\epsilon_n^2\}$ and $a_0$ as Claim 5.
\begin{proof}[\emph{\textbf{Proof of Claim 7}}]
From (\ref{EQ3'NN}) we have that for all $\alpha\in [0,\beta_n]$
\begin{equation*}
||J_{\eta_n}(t_n+\alpha)||\leq \epsilon_n^2+ C\lambda^t_n||\xi_n||\leq 1
\end{equation*}
for $n$ large enough.\\
So, from now on, we only need to be worried about $\alpha\in [-\chi_n,0]$. Consider $\alpha\in [-\chi_n,0]$, since $h(t_n)=0$, then from Mean Value Theorem
$$-\Gamma_n(\alpha)= -h_n(t_n+\alpha) = h_n'(w)\alpha, $$
for some $w\in (t_n+\alpha, t_n)$. Also,
$$h_{n}'(t_n)-h_{n}'(w)=h_n''(\tilde{w})(t_n-w)=2\Big(-K(\tilde{w})||J_{\eta_n}(\tilde{w})||^2+||J_{\eta_n}'(\tilde{w})||^2\Big )(t_n-w),$$
for some $\tilde{w}\in (w,t_n)$. Here $K(\tilde{w})=K(\gamma_{\theta_{n}}'(\tilde{w}), J_{\eta_n}(\tilde{w}))$ is the sectional curvature. \\
As $-\alpha \leq \chi_n\leq \mu_n$, then $t_n-\mu_n\leq t_n+\alpha<w<\tilde{w}<t_n$. Therefore, the definition of $\mu_n$ provides 
$||J_{\eta_n}(\tilde{w})||^2\leq \Gamma_n(\alpha)$.
Moreover, since $K(\tilde{w})\geq -k^2$ and $h_{n}'(t_n)=0$, we obtain
\begin{eqnarray}\label{EQ1N*}
\Gamma_n(\alpha)&\leq &  -2\Big(-k^2||J_{\eta_n}(\tilde{w})||^2+||J_{\eta_n}'(\tilde{w})||^2\Big )\alpha \nonumber \\
&\leq& -2\Big(-k^2\Gamma_n(\alpha)+||J_{\eta_n}'(\tilde{w})||^2\Big )\alpha \nonumber \\
&\leq & 2a_0\Big(-k^2\Gamma_n(\alpha)+||J_{\eta_n}'(\tilde{w})||^2\Big )\epsilon_n^2,
\end{eqnarray}
where in the last inequality we use that $-\alpha\leq \chi_n\leq a_0\epsilon_n^2$. \\
\ \\
Therefore, our task now is to estimate $||J_{\eta_n}'(\tilde{w})||^2\epsilon_n^2$, for  $\tilde{w}\in [t_n+\alpha, t_n]$ and $\alpha\geq -\chi_n$. 
We can write $\tilde{w}=t_n+a$, with $a\geq \alpha$. Thus, as $a\geq \alpha\geq -\chi_n\geq -a_0\epsilon_n^2$, then the Claim 5 provides 
\begin{equation}\label{EQ2'N}
\lim_{n\to \infty}\frac{\epsilon_n^2}{||J_{\xi_n}(t_n+a)||}=1.
\end{equation}
Moreover, as $a\geq \alpha \geq - \chi_n \geq -\sigma_n$, then the Claim 3 establish that  
\begin{equation}\label{EQ3'N}
||J'_{\eta_n}(t_n+a)||^2\epsilon_n^2\leq \frac{(W_n+|P_n(a)|)^2\epsilon_n^2}{\delta_0^2||J_{\xi_n}(t_n+a)||^2}.
\end{equation}
As $W_n \to W_0$ and $P_n(a)\to 0$, then from (\ref{EQ2'N}) the right side of (\ref{EQ3'N}) converges to $\dfrac{W_0^2}{\delta_0^2}$.
Consequently, for $n$ large enough, 

$$||J_{\eta_n}'(\tilde{w})||^2\epsilon_n^2=||J'_{\eta_n}(t_n+a)||^2\epsilon_n^2\leq \dfrac{W_0^2}{\delta_0^2}+1.$$
Then, from (\ref{EQ1N*}) we conclude that 

$$\Gamma_n(\alpha)\leq \frac{W_0^2+\delta_0^2}{(1+2a_0k^2)\delta_0^2}:=\Gamma_0.$$

\end{proof}

\noindent \textbf{Claim 8:} For $n$ large enough $$\mu_n\geq \min\{a_0\epsilon_n^2, \sigma_n\}.$$ 
\begin{proof}[\emph{\textbf{Proof of Claim 8}}]
Similar to last arguments, as $h_n'(t_n)=0$ and \newline $h_n'(t_n-\mu_n)=0$, then  
$$0=h'(t_n)-h_n'(t_n-\mu_n)=2\Big(-K(\omega_n)||J_{\eta_n}(\omega_n)||^2+||J_{\eta_n}'(\omega_n)||^2\Big )\mu_n$$
for some $\omega_n\in [t_n-\mu_n, t_n]$ and again $K(w_n)=K(\gamma_{\theta_{n}}'(w_n), J_{\eta_n}(w_n))$ is the sectional curvature.
 Thus 
$$b^2\geq K(\omega_n)=\dfrac{||J_{\eta_n}'(\omega_n)||^2}{||J_{\eta_n}(\omega_n)||^2}.$$
The definition of Anosov flow and definition of $\mu_n$ provide
\begin{equation}\label{EQ7N}
C^{-2}\lambda^{-2\omega_n}||\eta_n||\leq \Big(\dfrac{||J_{\eta_n}'(\omega_n)||^2}{||J_{\eta_n}(\omega_n)||^2}+1\Big)||J_{\eta_n}(\omega_n)||^2\leq (b^2+1)\Gamma_{n}(\mu_n).
\end{equation}
Now, assume by contradiction that $\mu_n<\min\{a_0\epsilon_n^2, \sigma_n\}$, then $\mu_n=\chi_n$. Thus, Claim 7 and  (\ref{EQ7N}) give us 
$$\dfrac{C^{-2}\lambda^{-2\omega_n}||\eta_n||}{b^2+1}\leq \Gamma_0.$$
Since $\eta_n$ converges to $\eta$, in particular, $||\eta_n||$ is bounded, then we have a contradiction, since for $n$ large enough the left side of the last inequality goes to infinite.  
\end{proof}
\noindent \textbf{Claim 9:} For $n$ large enough
$$\sigma_n\geq \min\{\mu_n, a_0\epsilon_n^2\}.$$ 
\begin{proof}[\emph{\textbf{Proof of Claim 9}}]
Recall that $g_n(t)=\langle J_{\xi_n}(t), J_{\eta_n}(t)\rangle$ and whose derivative is  

$$g_n'(t)=2\langle J_{\xi_n}(t), J'_{\eta_n}(t)\rangle - W_n$$
By definition of $\sigma_n$, if $g'_n(t_n-\sigma_n)\neq 0$, then $\mu_n=t_n$ and the claim holds. Thus, we can assume $g'_n(t_n-\sigma_n)=0$ and then   
$$\langle J_{\xi_n}(t_n-\sigma_n), J'_{\eta_n}(t_n-\sigma_n)\rangle=\frac{W_n}{2}.$$
Moreover, the Wroskian satisfies 
$$\langle J_{\xi_n}(t), J'_{\eta_n}(t)-\langle J'_{\xi_n}(t), J_{\eta_n}(t)\rangle=W_n.$$
Consequently, 
$$\langle J'_{\xi_n}(t_n-\sigma_n), J_{\eta_n}(t_n-\sigma_n)\rangle=-\frac{W_n}{2}.$$
Assume by contradiction that  $\sigma_n<\min\{\mu_n, a_0\epsilon_n^2\}$, then $\sigma_n=\chi_n$. Thus, the last equation, Claim 7, and  (\ref{EQ0'NN}) give us 
\begin{eqnarray}\label{EQ8N}
\frac{W_0^2}{16}\leq \frac{W_n^2}{4} &\leq & ||J'_{\xi_n}(t_n-a_n)||^2\cdot ||J_{\eta_n}(t_n-\sigma_n)||^2 \nonumber \\ 
&\leq & C^2\lambda^{2(t_n-a_n)}\Gamma_{n}(-\sigma_n)\leq C^2\lambda^{2(t_n-\sigma_n)}\Gamma_0.
\end{eqnarray}
The right side of (\ref{EQ8N}) converges to $0$, which is a contradiction since $W_0>0$.\newline
\end{proof}

\noindent Since $a_n$, $\mu_n$, and $\epsilon_n^2$ are non-negative numbers, then as an immediate consequence of Claim 8 and Claim 9 we have\\

\noindent \textbf{Claim 10:} For $n$ large enough
$$\min\{\sigma_n,\mu_n\}\geq a_0\epsilon_n^2,$$
consequently, $\chi_n=a_0\epsilon_n^2$. 

\ \\
Continuing with the proof of the lemma, from the Claim 6, for $n$ large enough 
$$-\omega_n:=\beta_n - \Big(\frac{1}{W_0} + 1 \Big)\epsilon_n^2<0.$$

\noindent Remember the parameter $a_{_0}:=\dfrac{1-\delta_0^2}{W_0}+2$. From Claim 6, it is easy to see that 
$$a_{_0}\epsilon_n^2 \geq \Big(\frac{1}{W_0} +1 \Big)\epsilon_n^2-\beta_n =\omega_n\geq \frac{1}{2}\epsilon_n^2.$$

\noindent Therefore, 
\begin{equation}\label{EQ13'NN'}
t_n-a_0\epsilon_n^2<t_n - \omega_n<t_n<t_n+\tilde{\omega}_n< t_n+\beta_n.
\end{equation}
Moreover, we know that $\gamma_{\theta^1_n}(t)$ has no conjugate point in $[-1, t_n-1]$. In particular, for all $\vartheta>0$ small enough, $\gamma_{\theta^1_n}(t)$ has no conjugate point in $[-1, t_n-1-\omega_n-\vartheta]$. Thus, since $J_n(0)=0$, then the  Lemma \ref{Green-Lemma} provides that  for all $\vartheta>0$ small enough 
 
$$||J_{n}(t_n-1 - \omega_n)||^2\geq \rho(\omega_n-\vartheta).$$ 
Since $\vartheta$ is arbitrary, then there we have 
$$||J_{n}(t_n-1-\omega_n)||^2\geq \rho(\omega_n).$$ 
The last inequality and definition of $\rho(t)$ provides 
\begin{equation}\label{EQ13'NN}
\Big(k\coth k+\dfrac{1}{\omega_n}+\dfrac{k^2}{3}\omega_n\Big)||J_{n}(t_n-1-\omega_n)||^2\geq \dfrac{\kappa_n^4}{\Big(\dfrac{8}{3}\kappa_n^2+\dfrac{16}{15}k^2 \Big)},
\end{equation}
where $\kappa_n=||J'_n(0)||=||J'_{\eta_n}(1)-J'_{\xi_n}(1)||$.\\
Multiplying and dividing by $\epsilon_n^2$ the left side of (\ref{EQ13'NN}) 
\begin{equation}\label{EQ14'NN}
\dfrac{\kappa_n^4}{\Big(\dfrac{8}{3}\kappa_n^2+\dfrac{16}{15}k^2 \Big)\Big(\epsilon_n^2\, k\coth k +\dfrac{\epsilon_n^2}{\omega_n}+\dfrac{k^2}{3}\epsilon_n^2\omega_n\Big)}\leq \dfrac{||J_{n}(t_n-1-\omega_n)||^2}{\epsilon_n^2}.
\end{equation}
From Claim 6, let $\epsilon>0$, such that for $n$ large enough we have that $$\epsilon_n^2\, k\coth k +\dfrac{k^2}{3}\epsilon_n^2\omega_n<\frac{\epsilon}{1-\epsilon} \,\,\, \text{and} \,\,\, \frac{\omega_n}{\epsilon_n^2}>1-\epsilon$$
Therefore, 
$$\dfrac{1}{\Big(\epsilon_n^2\, k\coth k +  \dfrac{\epsilon_n^2}{\omega_n}+\dfrac{k^2}{3}\epsilon_n^2\omega_n\Big)}>\frac{1-\epsilon}{1+\epsilon}>\frac{1}{2},$$
consequently, (\ref{EQ14'NN}) become 
\begin{equation}\label{EQ15*NN}
\frac{1}{2}\cdot\dfrac{\kappa_n^4}{\Big(\dfrac{8}{3}\kappa_n^2+\dfrac{16}{15}k^2 \Big)}\leq \dfrac{||J_{n}(t_n-1-\omega_n)||^2}{\epsilon_n^2}.
\end{equation}

\noindent From Taylor's theorem centered in $t_n-1+\beta_n$ we can write (similar to (\ref{EQ9'NN}))
\begin{eqnarray}\label{EQ15'NN'}
\frac{f_n(t_n-1-{\omega}_n)}{\epsilon_n^2}&=& \frac{f_n(t_n-1+\beta_n)}{\epsilon_n^2} - f'_n(t_n-1+\beta_n)\frac{(\beta_n+\omega_n)}{\epsilon_n^2} \nonumber \\
&+& 2\Big(||J_{n}'({\alpha}_n)||^2-K({\alpha}_n)||J_{n}({\alpha}_n)||^2\Big )\frac{(\beta_n+{\omega}_n)^2}{\epsilon_n^2},
\end{eqnarray}
for some $\alpha_n\in [t_n-1-{\omega}_n, t_n-1+\beta_n]$.

Let us estimate the right side of the last equation.
First note that from (\ref{EQ9NN}) and (\ref{EQ10'NN}) we have that for $n$ large enough 
\begin{equation}\label{EQ15NN}
\frac{f_n(t_n-1+\beta_n)}{\epsilon_n^2}\leq \frac{3}{2}.
\end{equation}

\noindent From (\ref{EQ13'NN'}), $\omega_n\leq a_0\epsilon_n^2$, thus since $\alpha_n\in [t_n-1-{\omega}_n, t_n-1+\beta_n]$, then Claim 7 and Claim 10 provides   

\begin{equation*}\label{EQ16'NN}
-k^2 \Gamma_0\leq K(\alpha_n)||J_{n}(\alpha_n)||\leq b^2\Gamma_0.
\end{equation*}
Also \begin{equation*}
\frac{(\beta_n+\omega_n)^2}{\epsilon_n^2}=\Big(\frac{1}{W_0}+\epsilon \Big)^2\epsilon_n^2.
\end{equation*}
Thus, it is easy to see that 
\begin{equation}\label{EQ16NN}
\lim_{n\to \infty} K(\alpha_n)||J_{n}(\alpha_n)||\frac{(\beta_n+\omega_n)^2}{\epsilon_n^2}=0.
\end{equation}
Moreover, from Claim 3 and Claim 5, for $n$ large enough 
$$||J'_{\eta_n}(\alpha_n)|| \frac{(\beta_n+\omega_n)^2}{\epsilon_n^2}=||J'_{\eta_n}(\alpha_n)||\epsilon_n^2\Big(\frac{1}{W_0}+\epsilon \Big)^2\leq \frac{1}{\delta_0^2}+\frac{1}{8}.$$
Thus, from definition of $J_n(t)$ the last equation implies that for $n$ large enough 
\begin{equation}\label{EQ17NN}
||J'_{n}(\alpha_n)|| \frac{(\beta_n+\omega_n)^2}{\epsilon_n^2}\leq \frac{1}{\delta_0^2}+\frac{1}{4}.
\end{equation}
Since $f_n'(t_n-1+\beta_n)=0$, then  (\ref{EQ15'NN'}), (\ref{EQ15NN}), (\ref{EQ16NN}), and (\ref{EQ17NN}) allow us to conclude that  for $n$ large enough 
\begin{equation}\label{EQ18NN}
\frac{f_n(t_n-1-{\omega}_n)}{\epsilon_n^2}\leq \frac{1}{\delta_0^2}+ 2.
\end{equation}

\noindent Finally, as $f_n(t_n-1-{\omega}_n)=||J_n(t_n-1-{\omega}_n)||^2$, then (\ref{EQ15*NN}) and (\ref{EQ18NN}) provide 
\begin{equation*}\label{EQ15'NN}
\dfrac{\kappa_n^4}{\Big(\dfrac{8}{3}\kappa_n^2+\dfrac{16}{15}k^2 \Big)}\leq \frac{2}{\delta_0^2}+ 4,
\end{equation*}
which implies that 
\begin{equation}\label{EQ15''NN}
\dfrac{\kappa_0^4}{\Big(\dfrac{8}{3}\kappa_0^2+\dfrac{16}{15}k^2 \Big)}\leq \frac{2}{\delta_0^2}+ 4.
\end{equation}
It is clear that (\ref{EQ15''NN}) is a contradiction with (\ref{EQ15*NN*}) and therefore we conclude the proof of Claim and consequently conclude the proof of Lemma.
\end{proof} 
\end{proof}

\begin{R}\label{RNEW1}
In the proof of \emph{Lemma \ref{Bounded t_{+}}}, only the upper bound of the curvature was utilized in \emph{(\ref{EQ10'NN})}, \emph{(\ref{EQ10NN})}, and \emph{(\ref{EQ7N})}. Therefore, we can amend the condition of bounded curvature from above to the following condition:
Put $$\mathcal{M}_{n} := \max \Big \{\sup_{t\in [t_n-1, t_n-1+\beta_n]} K(J_{n}(t), \gamma_{\theta_n^1}(t)), \sup_{t\in [t_n, t_n+\beta_n]} K(J_{\eta_n}(t), \gamma_{\theta_n}(t))\Big\},$$
then
\begin{equation}\label{EQ1N}
\liminf_{n\to \infty} \lambda^{t_n}\mathcal{M}_n =0,
\end{equation}
where $\lambda$ is the constant of contraction of the definition of Anosov flow.\\
Since $0<\lambda<1$, then it is clear that if $M$ has curvature bounded above, then satisfies \emph{(\ref{EQ1N})}.
\end{R}

To conclude this section, we establish Lemma \ref{Bounded t_{+}} in the two-dimensional scenario without imposing any restriction on the upper limit of the sectional curvature. To accomplish this, we require the following lemma.

\begin{lemma}[\textbf{Zeros of Jacobi Fields in Dimension 2}]\label{LN1}
If $a<b$ and $J$ is a Jacobi field such that $J(a)=J(b)=0$, then any Jacobi field $\tilde{J}$ has a zero in $[a,b]$. Moreover, if $\tilde{J}$ and $J$ are linearly independent, then $\tilde{J}$ has a zero in $(a,b)$.
\end{lemma}

\begin{lemma}\label{Bounded t_{+}-dim 2}
If the sectional curvature of $M$ is bounded below, then the sets $\mathcal{B}^{u}_{+}$ and $\mathcal{B}^{s}_{-}$ are closed.
\end{lemma}

The proof follows the same approach as the proof of Lemma \ref{Bounded t_{+}}, but we must avoid relying on the condition of the curvature being bounded above. Consequently, we only establish which claims of the proof of Lemma \ref{Bounded t_{+}} remain valid in the two-dimensional case without an upper bound on the sectional curvature.
\begin{proof}[\emph{\textbf{Proof of Lemma \ref{Bounded t_{+}-dim 2}}}]
Maintain the notation of Lemma \ref{Bounded t_{+}} throughout. For each $n$, let $V_n(t)$ be a parallel vector field along $\gamma_{{\theta_n}}(t)$. If $J$ is a Jacobi field along $\gamma{{\theta_n}}(t)$, then $J(t)=f(t)V_n(t)$, where $f(t)$ is a real function. Therefore, all Jacobi fields along $\gamma{{\theta_n}}(t)$ can be regarded as real functions. Henceforth, let $J{\xi_n}$ and $J_{\eta_n}$, defined similarly to Lemma \ref{Bounded t_{+}}, be considered as real functions. In this case, without loss of generality, we can assume that $J_{\eta_n}>0$ in $[0,t_n)$. From Lemma \ref{First-Time} and Lemma \ref{LN1}, we have that $J_{\xi_n}(t)$ has a unique zero $r_n$ in $[0,t_n)$. Consequently, $W_n=J_{\xi_n}(t_n)J_{\eta_n}'(t_n)>0$ and $W_0>0$.\\
\ \\
\textbf{Claim 1:} There is $u_n>t_n-1$ such that $J_n(u_n)=0$.
\begin{proof}[\emph{\textbf{Proof of Claim 1}}]

If $J_{\xi_n}(t+1)$ has a zero $z_n$, for some $z_n-1>t_n-1$, then from Lemma \ref{LN1} $J_{xi_n}(t)$ has a zero in $(t_n,z_n)$ and consequently $J_{n}$ has a zero in $(t_n-1, z_-1)$. Therefore, we can assume that and $J_{\eta_n}(t+1)<0$ for $t>t_n-1$. \\
On the other hand, by the uniqueness  of $r_n$ we know that $$J_n(t_n-1)=J_{\eta_n}(t_n)-J_{\xi_n}(t_n)=-J_{\xi_n}(t_n)>0.$$

Finally, we can assume by contradiction that $J_{n}(t)\neq 0$ for all $t>t_n-1$. Then from the last inequality  $J_{n}(t)>0$ and then 
$$J_{\xi_n}(t)<J_{\eta_n}(t)<0, \,\,\, t>t_n,$$
which allows us to conclude that 
$$|J_{\eta_n}(t)|<|J_{\xi_n}(t)|, \,\,\, t>t_n.$$
The last inequality provides a contradiction since $J_{\xi_n}$ is a stable Jacobi field and $J_{\eta_n}$ is unstable.
\end{proof}
\noindent Using the Claim 1, we consider the positive number 

$$\beta_n:=\inf \{u>0: J_{n}(t_n-1+u)=0\}.$$
This number $\beta_n$ has the same role as the $\beta_n$ defined in the proof of Lemma \ref{Bounded t_{+}}, but with the additional and important  properties $J_{n}(t_n-1+\beta_n)=0$.\\
To conclude the proof of this lemma, we need to prove that all ten claims of the proof of Lemma are valid without the condition of curvature bounded above. It is not difficult to check, such condition on the curvature is only used in Claim 4, Claim 6, and Claim 8. So, from now on we focus on proving such three Claims. 
Thus, note that in the two-dimensional case, the number $\delta_0$ of Claim 3 is equal to $1$.\\
\indent We also note that Claim 5 and Claim 6 of the proof of the last lemma depend on Claim 4, but here we joined both Claim 4 and Claim 6 in a single claim, and consequently Claim 5 is already valid. \\
\textbf{Claim 2:} In this case $\ds\lim_{n\to \infty}\dfrac{\beta_n}{\epsilon_n^2}=\dfrac{1}{W_0}$.
\begin{proof}[\emph{\textbf{Proof of Claim 2}}]
First, we prove that the sequence $\Big\{\dfrac{\beta_n}{\epsilon_n^2}\Big\}$ is bounded. For this sake, note that $J_{n}(t_n-1+\beta_n)=0$, so the mean value theorem provides
$$-\dfrac{\epsilon_n}{\beta_n}=\dfrac{J_n(t_n-1+\beta_n)-J_{n}(t_n-1)}{\beta_n}=J_{n}'(\tilde{\beta}_n),$$
for some $\tilde{\beta}_n\in (t_n-1, t_n-1+\beta_n)$. Therefore 
\begin{equation}\label{EQNEW1}
\dfrac{\epsilon_n^2}{\beta_n}=|J_{n}'(\tilde{\beta}_n)|^2\cdot\beta_n.
\end{equation}
If $\Big\{\dfrac{\beta_n}{\epsilon_n^2}\Big\}$ is an unbounded sequence, then passing to a sub-sequence if necessary, we have that $\ds\lim_{n\to \infty}\dfrac{\epsilon_n^2}{\beta_n}=0$, and the last inequality gives us that 
$$\lim_{n\to \infty}|J_{n}'(\tilde{\beta}_n)|^2\cdot\beta_n=0.$$
Hence, following the same lines of (\ref{EQ11'NN}), (\ref{EQ8NN'}), and (\ref{EQ1"NN}) we find a contradiction and the sequence $\Big\{\dfrac{\beta_n}{\epsilon_n^2}\Big\}$ should be bounded, and consequently, Claim 5 of the proof of last lemma holds.\\
Now, writing $\tilde{\beta}_n=t_n-1+\bar{\beta}_n$, for some $\bar{\beta}_n\in [0,\beta_n)$. Therefore from Claim 3 of the proof of Lemma \ref{Bounded t_{+}}, which is valid in this case,  and using that in this case $\delta_0=1$,  we have 
\begin{equation}\label{EQ6NN*}
\frac{(W_n-|P_n(\bar{\beta}_n)|)^2}{|J_{\xi_n}(t_n+\alpha)|^2}\leq |J'_{\eta_n}(t_n+\bar{\beta}_n)|^2\leq \frac{(W_n+|P_n(\bar{\beta}_n)|)^2}{|J_{\xi_n}(t_n+\bar{\beta}_n)|^2},
\end{equation} 
where $P_n(\bar{\beta}_n):= J'_{\xi_n}(t_n+\bar{\beta}_n)\cdot J_{\eta_n}(t_n+\bar{\beta}_n)$.
From (\ref{EQNEW1}) and (\ref{EQ6NN*}) we have 
\begin{equation}\label{EQ6NN1*}
\frac{(W_n-|P_n(\bar{\beta}_n)|)^2\epsilon_n^2}{|J_{\xi_n}(t_n+\alpha)|^2}\leq \Bigg(\dfrac{\epsilon_n^2}{\beta_n}\Bigg )^2\leq \frac{(W_n+|P_n(\bar{\beta}_n)|)^2\epsilon_n^2}{|J_{\xi_n}(t_n+\bar{\beta}_n)|^2}.
\end{equation} 
Since $W_n$ converges to $W_0>0$ and $P_{n}(\bar{\beta}_n)$ converges to $0$, then from Claim 3 of the proof of Lemma \ref{Bounded t_{+}} and (\ref{EQ6NN1*}), we conclude that 
$$\ds\lim_{n\to \infty}\dfrac{\beta_n}{\epsilon_n^2}=\dfrac{1}{W_0},$$
as we wanted.
\end{proof}
Finally, we prove that Claim 8 (Claim 3 below) of the proof of Lemma \ref{Bounded t_{+}} is valid and consequently, we have the proof of lemma follow the same lines as the proof of Lemma \ref{Bounded t_{+}}. Remember we always keep the notion of last Lemma.\\
\ \\
\noindent \textbf{Claim 3:} For $n$ large enough $$\mu_n\geq \min\{a_0\epsilon_n^2, \sigma_n\}.$$ 
\begin{proof}[\emph{\textbf{Proof of Claim 3}}] From the definition of $\mu_n$ $J_{\eta_n}'(t_n-\mu_n)=0$. Assume by contradiction that $\mu_n<\min\{a_0\epsilon_n^2, \sigma_n\}$, then $\mu_n=\chi_n$, and from Claim 7 (of the proof of Lemma \ref{Bounded t_{+}}) $|J_{\eta_n}(t_n-\mu_n)|^2\leq \Gamma_0$. Therefore 
$$C^{-2}\lambda^{-2(t_n-\mu_n)}||\eta_n||\leq |J_{\eta_n}(t_n-\mu_n)|^2+|J'_{\eta_n}(t_n-\mu_n)\leq \Gamma_0.$$
Since $||\eta_n||$ is bounded and away from zero and $\mu_n<\min\{a_0\epsilon_n^2, \sigma_n\}$ is small, then the last inequality provides a contradiction.
\end{proof}
The remains of the proof of lemma follow the same lines as the proof of Lemma \ref{Bounded t_{+}}.
\end{proof}
\section{Proof of the Main Results}\label{Main-Results}
The main goal of this section is to prove Theorem \ref{main1} and Theorem \ref{main2}. Both proofs consist of proving that $\B^{s,u}=\emptyset$, and then using the Lemma \ref{Mane Lemma}.
\ \\
\indent To avoid always mentioning the sectional curvature conditions,  we used the following definition:

\begin{Defi}
A complete manifold $M$ with bounded curvature below is \emph{nice} if: $M$ has dimension two or $M$ has dimension greater than two and its sectional curvature is bounded.
\end{Defi} 
This definition allows us to condense the Lemma \ref{Bounded t_{+}} and the Lemma \ref{Bounded t_{+}-dim 2} into the following lemma.
\begin{lemma}\label{Bounded-time}
If  $M$ is a nice manifold, then the sets $\mathcal{B}^{u}_{+}$ and $\mathcal{B}^{s}_{-}$ are closed.
\end{lemma}
Now, with the help of Lemma \ref{Bounded-time}, our task is reduced to proving the following Theorem.
\begin{T}\label{T-MAIN1} Let $M$ be a nice manifold with Anosov geodesic flow. Then, the sets 
 $$\Lambda^{s,u}=\Big\lbrace\theta\in SM: \text{for all} \,\, t\in \mathbb{R} \,\, \, \, \phi^{t}(\theta)\notin \B^{s,u}\Big \rbrace.$$ are closed, open, and nonempty  subsets of $SM$. 
\end{T}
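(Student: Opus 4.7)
My plan is to establish that $\Lambda^{s(u)}$ is simultaneously closed, open and nonempty in the connected space $SM$; connectedness then forces $\Lambda^{s(u)}=SM$, i.e.\ $\B^{s(u)}=\emptyset$, which is exactly Theorem \ref{main2}. A preliminary reduction: the Transfer Property (Lemma \ref{Transfer Property}) implies $\Lambda^s=\Lambda^u$, since any $\B^s$-hit and any $\B^u$-hit along an orbit are within $1+\sigma$ of each other; I call this common set $\Lambda$ and work with it.

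For \textbf{closedness}, I appeal to Lemma \ref{Closed}. Given $\theta_n\in\Lambda^s$ with $\theta_n\to\theta$, one has $\|U^s_{\theta_n}(t)\|\leq k$ uniformly in $t$, so for each fixed $t$, after identifying the nearby fibers $N(\phi^t(\theta_n))$ with $N(\phi^t(\theta))$ via continuity of the splitting and passing to a subsequence, these symmetric operators converge to a symmetric operator of norm at most $k$ whose graph must equal $E^s(\phi^t(\theta))$ by continuity of $E^s$. Hence $E^s(\phi^t(\theta))\cap V(\phi^t(\theta))=\{0\}$ for every $t$, so $\theta\in\Lambda^s$.

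For \textbf{openness}, I fix $\theta\in\Lambda$, note $\theta\notin\B^s\cup\B^u$, and combine the two one-sided bounds. Lemma \ref{Bounded t_{+}} produces a compact neighborhood $\mathcal{U}^s_\theta$ and a threshold $T_0(\theta)$ controlling the first forward $\B^u$-hit inside $\mathcal{U}^s_\theta$; since the compact arc $\{\phi^t(\theta):0\leq t\leq T_0(\theta)\}$ is disjoint from the closed set $\B^u$, continuity of $\phi^t$ lets me shrink $\mathcal{U}^s_\theta$ to a neighborhood $V$ on which no orbit enters $\B^u$ during $[0,T_0(\theta)]$, and the Main Lemma promotes this to $\phi^t(z)\notin\B^u$ for all $t\geq 0$. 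The symmetric Lemma \ref{Bounded t^{s}_{-}} applied at $\theta\notin\B^u$ similarly yields $\phi^t(z)\notin\B^s$ for $t\leq 0$, and by the Transfer Property this forces $\phi^t(z)\notin\B^u$ for $t\leq -(1+\sigma)$; continuity fills in the compact gap $[-(1+\sigma),0]$, so $z\in\Lambda^u=\Lambda$.

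The main obstacle is \textbf{nonemptyness}. My strategy is the following reduction: if there exists $\theta_0\in SM$ with $\phi^t(\theta_0)\notin\B^u$ for all $t\geq 0$, then $\theta_0\in\Lambda$. Indeed, the Transfer Property already rules out $\phi^t(\theta_0)\in\B^s$ for $t\geq 1+\sigma$; Lemma \ref{Unif} then furnishes a uniform constant $T^*$ such that, for every $t\geq\log 3/(2k)$, any backward $\B^s$-hit observed from $\phi^t(\theta_0)$ must lie within time $T^*$ of $t$. Letting $t\to\infty$ forces every $\B^s$-hit along the orbit of $\theta_0$ to be pushed past every fixed value, which is absurd; a final use of the Transfer Property then eliminates every $\B^u$-hit as well. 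To actually produce such a forward-$\B^u$-free $\theta_0$, I will analyze the discrete forward hit-set of an arbitrary orbit (Lemma \ref{Discrete Intersection 1}): if it is bounded above, translating past its maximum yields $\theta_0$ directly; if it is unbounded, I plan to use the uniform controls of Corollary \ref{Unif Bounded t_{+}} and Lemma \ref{Good t>0} at the midpoints of widening gaps between consecutive hits to extract a limit inheriting forward $\B^u$-avoidance. The hard part here is that $SM$ is non-compact, so the limiting step must be carried out entirely inside the uniform Riccati/Jacobi controls of Section 3; these lemmas are precisely designed to prevent escape of footpoints from sabotaging the limit.
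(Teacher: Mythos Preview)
Your arguments for closedness and openness are essentially the paper's, phrased contrapositively, and your reduction step in the nonemptyness part --- ``if some $\theta_0$ has $\phi^t(\theta_0)\notin\B^u$ for all $t\geq 0$, then $\theta_0\in\Lambda$'' --- is correct and is exactly what the paper establishes via Lemma~\ref{Unif} and the Transfer Property.

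The genuine gap is in how you \emph{produce} such a forward-$\B^u$-free $\theta_0$. Your dichotomy on an arbitrary orbit does not close. In the ``unbounded'' branch you want to look at midpoints of widening gaps between consecutive $\B^u$-hits and extract a limit, but nothing forces the gaps to widen: Lemma~\ref{Discrete Intersection 1} only says the hit-set is discrete, so hit times like $t_k=k$ are not excluded by anything you have cited. Even if the gaps did widen, $SM$ is non-compact and your midpoints can run off to infinity; the Riccati bounds of Corollary~\ref{Unif Bounded t_{+}} and Lemma~\ref{Good t>0} control only the operators $U^{u}(t)$ on the fibres $N(\phi^t(\theta))$, not the location of $\phi^t(\theta)$ in $SM$, so they cannot prevent escape of footpoints or manufacture a limit point. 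Your last sentence acknowledges this difficulty but does not resolve it.

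The paper sidesteps the whole issue with a geometric input you did not use: since $M$ is complete and non-compact, there exists a geodesic \emph{ray} $\gamma_\theta\colon[0,\infty)\to M$, i.e.\ a globally minimizing half-geodesic. A ray has no conjugate points on $(0,\infty)$, so Lemma~\ref{Knieper Lemma} immediately gives $\phi^t(\theta)\notin\B^s\cup\B^u$ for all $t\geq 1+\sigma$. Setting $\theta_1=\phi^{t_1}(\theta)$ for suitably large $t_1$ then feeds directly into your (correct) reduction step via Lemma~\ref{Unif}. No limiting procedure in $SM$ is needed.
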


Since $SM$ is a connected manifold, then any subset open, closed, and nonempty should be $SM$. Thus, as a corollary of Theorem \ref{T-MAIN1}, we have:
\begin{C}\label{Cor1-Main1}
If $M$ is nice, then  $\B^{s,u}=\emptyset$.
\end{C}

\noindent From the last Corollary \ref{Cor1-Main1}, we have that 
$$E^{s,u}(\theta)\cap V(\theta)=\emptyset, \,\,\, \text{for all}\,\,\, \theta\in SM.$$
Thus, since $E^{s,u}$ are Lagrangian, then by Lemma \ref{Mane Lemma} we can conclude that $M$ has no conjugate points, and \emph{afortiori} the proof of Theorem \ref{main1} and Theorem \ref{main2}. \

\begin{proof}[\emph{\textbf{Proof of Theorem \ref{T-MAIN1}}}] From Lemma \ref{Transfer Property}\,\,(Transfer Property) holds that $\Lambda^{s}=\Lambda^{u}$. Thus, it is sufficient to prove that $\Lambda^u$ is an open, closed, and nonempty set. \\
\ \\
\noindent \textbf{Nonempty:} It is a consequence of Lemma \ref{Nonempty-Good-Set}.\\
\ \\
\noindent \textbf{Closedness:}   It is an immediate consequence of Lemma \ref{Closed} (a similar argument was used by Ma\~n\'e in \cite{man:87}). \\
\ \\
\textbf{Openness:} We {will} prove that $SM \setminus \Lambda^{u}$ is closed. In fact, 
assume that $\theta_n \to \theta$ with $\theta_n\in SM\setminus \Lambda^{u}$, then there is $t_n\in \re$ such that $\phi^{t_n}(\theta_n)\in \B^{u}$.\\ 
\noindent  We have two cases to study. \\ 
\indent \textbf{Case 1:} For infinitely many indexes $n$, $t_n>0$. \\
In this case, for infinitely many indexes $n$,  $\theta_n\in \B^u_{+}$ and from Lemma \ref{Bounded-time} we have that $\theta\in \B^u_{+}$ and consequently $\theta \in SM\setminus \Lambda^{u}$. 
\\
\indent\textbf{Case 2:} For infinitely many indexes $n$, $t_n<0$. \\\
Then by Lemma \ref{Transfer Property}, we have that there is $\tilde{t}_n$  with $|t_n - \tilde{t}_n| \leq 1 + \sigma $ such that  $ \phi^{\tilde{t}_n}(\theta_n)\in \B^s$. 
\begin{itemize}
\item If for infinitely many indexes $\tilde{t}_{n}<0$, then $\theta_{n}\in \B^s_{-}$ and from Lemma \ref{Bounded-time} we have that $\theta \in \B^{s}_{-}$, and consequently $\theta\in SM\setminus \Lambda^s=SM\setminus \Lambda^u$.
\item If for infinitely many indexes $\tilde{t}_n > 0$, then $0< \tilde{t}_n  < 1 + \sigma+t_n< 1+\sigma$.
  Thus, passing to a subsequence if necessary,  from Lemma \ref{Le-Closed} we have that $\phi^{\tilde{t}_n}(\theta_n)\to \phi^{t_1}(\theta)\in \B^s$, which implies $\theta \in SM\setminus\Lambda^s=SM\setminus\Lambda^u$.\\
\end{itemize}
\vspace{-0.5cm}
\end{proof}

\noindent \textbf{Sergio Augusto Roma\~na Ibarra}\\
Universidade Federal do Rio de Janeiro\\
Av. Athos da Silveira Ramos 149, Instituto de Matem\'atica, Centro de Tecnologia \ - Bloco C \ - Cidade Universit\'aria Ilha do Fund\~ao, cep 21941-909, Rio de Janeiro - Brasil\\
E-mail: sergiori@im.ufrj.br\\
\ \\
\noindent \textbf{\'Italo Dowell Lira Melo}\\
Universidade Federal do Piau\'i \\ 
Departamento de Matem\'atica-UFPI, Ininga, cep 64049-550 \\
Piau\'i-Brasil \\
E-mail: italodowell@ufpi.edu.br
\end{document}